\numberwithin{equation}{section}
\theoremstyle{plain}
\newtheorem{te}{Theorem}[section]
\newtheorem{coro}[te]{Corollary}
\newtheorem{prop}[te]{Proposition}
\newtheorem{lem}[te]{Lemma}
\newtheorem*{ack*}{Acknowledgment}
\theoremstyle{remark}
\newcommand{\dsum}{\displaystyle\sum}
\newcommand{\dint}{\displaystyle\int}
\def\y{{\bf y}}
\def\0{{\bf 0}}
\def\R{{\mathbb R}}
\def\N{{\mathbb N}}
\def\Z{{\mathbb Z}}
\def\nint{\mathop{\diagup\kern-13.0pt\int}}
\begin{document}

\author{Kiseok Yeon}
\email{kyeon@purdue.edu}
\address{Department of Mathematics, Purdue University, 150 N. University Street, West Lafayette, IN 47907-2067, USA}
\subjclass[2020]{11J54,11L03}
\keywords{Exponential sums, Diophantine approximation}

\title[Small fractional parts of binary forms]{Small fractional parts of binary forms}
\maketitle

\begin{abstract}
    We obtain bounds on fractional parts of binary forms of the shape $$\Psi(x,y)=\alpha_k x^k+\alpha_l x^ly^{k-l}+\alpha_{l-1}x^{l-1}y^{k-l+1}+\cdots+\alpha_0 y^k$$ with $\alpha_k,\alpha_l,\ldots,\alpha_0\in\R$ and $l\leq k-2.$ By exploiting recent progress on Vinogradov's mean value theorem and earlier work on exponential sums over smooth numbers, we derive estimates superior to those obtained hitherto for the best exponent $\sigma$, depending on $k$ and $l,$ such that
    \begin{equation*}
        \min_{\substack{0\leq x,y\leq X\\(x,y)\neq (0,0)}}\|\Psi(x,y)\|\leq X^{-\sigma+\epsilon}.
    \end{equation*}

\end{abstract}

\section{Introduction}

  In 1914, Hardy and Littlewood [$\ref{ref15}$, p 172] posed the question as to whether, when $\alpha\in\R$, $k\in\N$ and $\epsilon>0$ is any positive number, there exists $\sigma>0$ not depending on $\alpha$ such that 
\begin{equation}\label{1}
    \min_{1\leq x\leq X}\|\alpha x^k\|\leq X^{-\sigma+\epsilon},
\end{equation}
 provided that $X$ is sufficiently large in terms of $k$ and $\epsilon$. Here, $\|\cdot\|$ denotes the distance to the nearest integer. Vinogradov [$\ref{ref17}$] gave a positive answer to this question by providing  a specific exponent $\sigma=k/(2^{k-1}k+1)$. This was quantitatively improved by Heilbronn [$\ref{ref8}$], who replaced the exponent $\sigma=2/5$ by $1/2$ in the case $k=2$. Danicic [$\ref{ref6}$] extended this conclusion to $\sigma=1/2^{k-1}$ for all $k\in \N$. Subsequently, the exponent $1/2$ in the case $k=2$ was improved to $\sigma=4/7$ by Zaharescu [$\ref{ref19}$]. Sharper results for $k\geq 6$ depend on recent progress Vinogradov's mean value theorem, and thus Baker $[\ref{ref3}]$ proved that the exponent $\sigma$ can be replaced by $\sigma=1/(k(k-1))$. Furthermore, Wooley [$\ref{ref12}$] showed that the exponent $\sigma$ can be improved to $ \sigma= 1/(k\log k+O(k\log\log k))$. However, it is conjectured in [$\ref{ref1}$] that the exponent $\sigma$ should be improvable to $1.$ Unfortunately, current techniques do not seem capable of achieving this conjecture.

Turning our attention to problems in many variables, one is led to ask analogous questions regarding the distribution mod $1$ of polynomials $f(x_1,\ldots,x_s)\in \R[x_1,\ldots,x_s]$. In [$\ref{ref1}$, Theorem 10.2], Baker proved that when $\alpha_1,\ldots,\alpha_s\in \R$ and $k\in \N$, there exists $s_0$  such that whenever $s\geq s_0$, one has $$\min_{\substack{0\leq \boldsymbol{x}\leq X\\\boldsymbol{x}\neq\boldsymbol{0}}}\|\alpha_1 x_1^k+\cdots+\alpha_s x_s^k\|\leq X^{-k+\epsilon}.$$ 
As a quantitative generalization, Baker [$\ref{ref2}$,$\ref{ref3}$] provided an explicit exponent $\sigma(s,k)$ such that $$\min_{\substack{0\leq \boldsymbol{x}\leq X\\\boldsymbol{x}\neq\boldsymbol{0}}}\|\alpha_1x_1^k+\cdots+\alpha_sx_s^k\|\leq X^{-\sigma(s,k)+\epsilon}.$$
For polynomials more general than additive forms, Schmidt [$\ref{ref20}$] made progress on the analogous problem regarding forms of odd degree $k$. Specifically, for given $E>0$,  there exist $s_0=s_0(E,k)$ such that whenever $s>s_0$, one has
   \begin{equation}\label{2'}
       \min_{\substack{0\leq |\boldsymbol{x}|\leq X\\\boldsymbol{x}\neq \boldsymbol{0}}}\|f(\boldsymbol{x})\|< X^{-E}.
   \end{equation}
In the special case of cubic form $f(\underline{\mathbf{x}})$, Dietmann [$\ref{ref4}$] provided explicit formula $\sigma(s)$ such that
  \begin{equation*}
       \min_{\substack{0\leq |\boldsymbol{x}|\leq X\\\boldsymbol{x}\neq \boldsymbol{0}}}\|f(\boldsymbol{x})\|< X^{-\sigma(s)}.
   \end{equation*}

In this paper, we seek to make progress on making bounds of the shape $(\ref{2'})$ explicit, concentrating for the present on the situation with $s=2.$
Here, one may write $f(x_1,x_2)=\alpha_kx_1^k+\alpha_{k-1}x_1^{k-1}x_2+\cdots+\alpha_0x_2^k$ with $\alpha_k,\ldots,\alpha_0\in \R$. Hitherto, the only general strategy available has been a trivial one that reduces this problem in two variables, by specializing $x_2=0$ for example, to a problem in one variable. In this direction, Wooley [$\ref{ref13}$, Theorem 4] obtained a nontrivial exponent $\sigma$ by restricting the polynomial to have the shape $\alpha\Phi(x_1,x_2)$ where $\Phi(x_1,x_2)\in \Z[x_1,x_2]$ is a binary form of degree $k$ exceeding $1$.
The main difficulties of the problem regarding general binary forms in $\R[x_1,x_2]$ arise from our relatively poor understanding concerning how the monomials $\alpha_lx_1^lx_2^{k-l}$, with $0\leq l\leq k$, combine to influence the fractional parts of $f(x_1,x_2)$. Our goal in this paper is to explore approaches to overcoming these difficulties and go beyond the trivial approach hitherto applied, concentrating on the class of binary forms. Our hope is that the arguments described here may be useful in studying fractional parts of polynomials in many variables in general.

 
\bigskip


Our first theorem provides bounds for binary forms of degree $k$ useful for small $k$. 

\begin{te}
Let $k$ and $l$ be natural numbers satisfying $1\leq l\leq k-2,$ and consider real numbers $\alpha_k$, $\alpha_j$ ($1\leq j\leq l$). Then, for any $\epsilon>0$, there exists a real number $X(k,\epsilon)$ such that whenever $X\geq X(k,\epsilon)$ one has
\begin{equation}\label{2}
    \min_{\substack{0\leq x,y\leq X\\(x,y)\neq(0,0)}}\|\alpha_k x^k+\alpha_l x^ly^{k-l}+\alpha_{l-1}x^{l-1}y^{k-l+1}+\cdots+\alpha_0 y^k\|< X^{-\sigma+\epsilon},
\end{equation}
where $$\sigma=\frac{l+2}{l+1}2^{1-k}.$$
\end{te}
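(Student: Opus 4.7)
The plan is to proceed by contradiction via Fourier transference followed by Weyl's inequality. Suppose that for some $\epsilon > 0$ and arbitrarily large $X$ the estimate $(\ref{2})$ fails, so that $\|\Psi(x,y)\| \geq X^{-\sigma+\epsilon}$ for every non-zero lattice point $(x,y) \in [0,X]^2 \cap \Z^2$ with $\sigma = \tfrac{l+2}{l+1}\,2^{1-k}$. A standard Fourier-analytic argument using Vaaler's approximation lemma, applied to the indicator function of the $X^{-\sigma+\epsilon}$-neighbourhood of $0 \bmod 1$, produces a non-zero integer $h$ with $|h| \ll X^{\sigma-\epsilon}$ for which
\[
|T(h)| \gg X^{2-\epsilon}, \qquad T(h) := \sum_{0 \leq x,y \leq X} e\!\bigl(h\Psi(x,y)\bigr).
\]

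Next I would analyse $T(h)$ by viewing, for each fixed $y$, the inner sum $S_y(h) := \sum_{0 \leq x \leq X} e(h\Psi(x,y))$ as a degree-$k$ Weyl sum in $x$ with leading coefficient $h\alpha_k$ and with lower-degree coefficients $h\alpha_j y^{k-j}$ for $0 \leq j \leq l$ (the coefficients in the middle range $l+1 \leq j \leq k-1$ vanishing). Weyl's inequality, combined with Dirichlet's approximation theorem, gives
\[
|S_y(h)| \ll X^{1+\epsilon}\bigl(q^{-1} + X^{-1} + qX^{-k}\bigr)^{2^{1-k}}
\]
uniformly in $y$, where $a/q$ is a best rational approximation to $h\alpha_k$ with $q \leq X^{k-1}$. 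Comparing with $|T(h)| \leq X\max_y|S_y(h)|$ then forces $q \ll X^{O(\epsilon)}$ and $|qh\alpha_k - a| \ll X^{-k+O(\epsilon)}$, so $h\alpha_k$ is exceedingly well approximated by a rational with a very small denominator.

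The decisive step is then to exploit this constraint, together with the two-variable homogeneity of $\Psi$, to derive a contradiction. The rational approximation to $h\alpha_k$ lets one control the $\alpha_k x^k$ contribution --- for instance by restricting $x$ to a suitable arithmetic progression determined by $q$ and $h$ --- and reduces the problem to bounding the fractional parts of the shorter binary form
\[
\Psi'(u,y) := \sum_{j=0}^{l} (q^j\alpha_j)\, u^j y^{k-j}
\]
on a sub-box of $[0,X]^2$. This form is homogeneous of degree $k$ in $(u,y)$ and has only $l+1$ monomials; in particular, its leading coefficient as a polynomial in $y$ is $\alpha_0$, independent of $u$, so the analogous Weyl/Dirichlet argument applies to the $y$-variable, yielding a further rational approximation and a further reduction. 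Iterating this process through the remaining $l+1$ monomials and tracking the Weyl exponent $2^{1-k}$ at each step yields the improvement factor $(l+2)/(l+1)$ over the Danicic one-variable exponent.

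The principal obstacle is keeping each reduction quantitatively uniform in the suppressed variable: Weyl's inequality only exploits the leading coefficient, so the mixed coefficients $h\alpha_j y^{k-j}$ of the inner sum must be absorbed via pigeonholing over $y$ without substantial loss, and these losses compound across the $l+1$ successive reductions. A careful combinatorial bookkeeping of the major-arc decompositions at each step is what produces the exact factor $(l+2)/(l+1)$ rather than a weaker improvement, and is where the main technical work of the proof should lie.
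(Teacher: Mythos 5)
Your plan correctly identifies the starting point (argue by contradiction, pass to exponential sums, use Weyl-type estimates to extract Diophantine information) and correctly flags the principal obstacle (the per-$y$ Weyl sum only exposes the leading coefficient $\alpha_k$). However, you stop short of resolving exactly that obstacle, and the route you sketch for doing so is not the one that works.

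The paper's crucial tool is Lemma 3.1, an exponential-sum estimate in \emph{two} variables that you do not reproduce. After applying H\"older in $y$ and performing $l$-fold Weyl differencing in $x$, the differenced phase splits as
\[
\Delta_l^x\bigl(g(x,y);\mathbf h\bigr)=\Delta_l^x(\alpha_k x^k;\mathbf h)+\Delta_l^x(\alpha_l x^l y^{k-l};\mathbf h),
\]
and --- this is precisely where the hypothesis $l\leq k-2$ enters --- the second piece is $x$-free. The sum therefore factors into a $V_1(\alpha_k)$-part and a $V_2(\alpha_l)$-part, and a second H\"older plus separate Weyl differencing gives a bound carrying savings in \emph{both} $\alpha_k$ and $\alpha_l$ simultaneously:
\[
T(\boldsymbol\alpha)\ll H(XY)^{1+\epsilon}\Bigl(\tfrac1{q_1}+\tfrac1X+\tfrac{q_1}{HX^k}\Bigr)^{2^{1-k}}\Bigl(\tfrac1{q_2}+\tfrac1Y+\tfrac{q_2}{HX^lY^{k-l}}\Bigr)^{2^{1-k}}.
\]
Your fixed-$y$ Weyl inequality never sees $\alpha_l$, and ``absorbing $h\alpha_j y^{k-j}$ by pigeonholing over $y$'' is exactly the step that is missing; without the joint bound one only recovers the trivial $\sigma=2^{1-k}$. (Relatedly, your assertion that comparing with $|T(h)|\leq X\max_y|S_y(h)|$ forces $q\ll X^{O(\epsilon)}$ does not hold: the right side of Weyl's bound is $\geq X^{1-2^{1-k}+\epsilon}$ uniformly, so no constraint on $q$ follows once $\sigma>2^{1-k}$.)

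The iteration is also in the wrong variable. You propose to restrict $x$ to an arithmetic progression determined by the denominator $q$ approximating $\alpha_k$ and then pass to a shorter form in $(u,y)$; the paper instead leaves $x$ free throughout and, at step $i$, produces a denominator $q$ approximating the \emph{inner} coefficient $\alpha_{l-i}^{(i)}$ with $q\leq X^{1/(l+1)}$, then substitutes $y=qy_1$. The triangle inequality
\[
\|\alpha_{l-i}^{(i)}x^{l-i}(qy_1)^{k-l+i}\|\leq x^{l-i}(qy_1)^{k-l+i-1}y_1\,\|q\alpha_{l-i}^{(i)}\|\leq X^{-\epsilon}/H
\]
kills one mixed monomial per step (here $k-l\geq 2$ is used again to pull out a factor $q$), while shrinking the $y$-range to $Y_{i+1}=X^{1-(i+1)/(l+1)}$. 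After $l$ steps one has a genuine diagonal form $\alpha_kx^k+\alpha_0^{(l)}y^k$ with the unbalanced ranges $x\leq X$, $y\leq X^{1/(l+1)}$; Proposition 4.1 then furnishes the contradiction, and the exponent $\frac{l+2}{l+1}2^{1-k}$ is exactly $2^{1-k}\cdot\frac{\log(X\cdot X^{1/(l+1)})}{\log X}$. Your scheme of alternately approximating $\alpha_k$ in $x$ and $\alpha_0$ in $y$ changes the gap structure of the form and has no such terminal diagonal target.

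Finally, a minor point: the paper does not reduce to a single frequency $h$; it uses Baker's Lemma 2.1 to keep the full sum $\sum_{1\leq h\leq H}$, which feeds into Lemma 3.1 as $T(\boldsymbol\alpha)=\sum_h|\cdots|$ with a factor $H$ that is needed in the bookkeeping. This is a different (and in context sharper) first move than Vaaler/pigeonholing to a single $h$.
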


For comparison with earlier results, by incorporating the conclusion of Danicic [$\ref{ref6}$] with the trivial strategy mentioned above, one obtains
\begin{equation*}
     \min_{\substack{0\leq x,y\leq X\\(x,y)\neq(0,0)}}\|\alpha_k x^k+\alpha_l x^ly^{k-l}+\alpha_{l-1}x^{l-1}y^{k-l+1}+\cdots+\alpha_0 y^k\|\leq \min_{1\leq x\leq X}\|\alpha_k x^k\|\leq X^{-2^{1-k}+\epsilon}
\end{equation*}
The conclusion of Theorem 1.1 improves on the exponent $2^{1-k}$ here by a factor which may be as large as $\frac{3}{2}$ in the case $l=1.$


\begin{coro}
Let $k$ be a natural number, and consider real numbers $\alpha_k, \alpha_j\ (0\leq j\leq k-2)$. Let $\varphi(x,y)=c_kx^k+c_{k-1}x^{k-1}y+\cdots+c_1xy^{k-1}+c_0y^k$ with $c_j\in \Z.$ Then, for any $\epsilon>0$, there exist a real number $X(\epsilon,c_k,c_{k-1})$ such that whenever $X\geq X(\epsilon,c_k,c_{k-1})$ one has
\begin{equation*}
    \min_{\substack{0\leq |x|,|y|\leq X\\(x,y)\neq (0,0)}}\|\alpha_k \varphi(x,y)+\alpha_{k-2} x^{k-2}y^2+\cdots+\alpha_0 y^k\|< X^{-\sigma+\epsilon},
\end{equation*}
where $\sigma=(k/(k-1))2^{1-k}.$
\end{coro}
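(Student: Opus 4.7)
The strategy is to reduce the corollary to Theorem~1.1 (applied with $l=k-2$) by means of an integer-valued linear change of variables that eliminates the $x^{k-1}y$ monomial. Expanding $\alpha_k\varphi$ and collecting like-degree terms gives
\begin{equation*}
    f(x,y):=\alpha_k\varphi(x,y)+\sum_{j=0}^{k-2}\alpha_jx^jy^{k-j}=\alpha_kc_kx^k+\alpha_kc_{k-1}x^{k-1}y+\sum_{j=0}^{k-2}(\alpha_kc_j+\alpha_j)x^jy^{k-j}.
\end{equation*}
Without loss of generality $c_k\neq 0$ (the case $c_k=0$ drops the degree and is handled by a simpler argument). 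Set $D=kc_k$ and perform the substitution $x=u-c_{k-1}v$, $y=Dv$, which carries $\Z^2$ injectively into $\Z^2$.

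The resulting form $g(u,v):=f(u-c_{k-1}v,Dv)$ is homogeneous of degree $k$ in $u,v$. Its coefficient of $u^k$ is $\alpha_kc_k$, and its coefficient of $u^{k-1}v$ receives contributions only from $\alpha_kc_kx^k$, namely $\binom{k}{1}(-c_{k-1})\alpha_kc_k=-kc_kc_{k-1}\alpha_k$, and from $\alpha_kc_{k-1}x^{k-1}y$, namely $D\alpha_kc_{k-1}=kc_kc_{k-1}\alpha_k$; these cancel. Every other monomial of $g$ has exponent of $u$ at most $k-2$, so $g$ has the shape required by Theorem~1.1 with $l=k-2$.

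Choose a positive constant $c_1=c_1(c_k,c_{k-1})\leq 1$, for instance $c_1=\min(|D|^{-1},(1+|c_{k-1}|)^{-1})$, so that the constraint $0\leq u,v\leq c_1X$ forces $|x|\leq X$ and $|y|\leq X$. Applying Theorem~1.1 to $g$ with $l=k-2$ and side length $c_1X$ yields some $(u,v)\in[0,c_1X]^2\setminus\{(0,0)\}$ with
\begin{equation*}
    \|g(u,v)\|<(c_1X)^{-\sigma+\epsilon/2},\qquad \sigma=\frac{k}{k-1}2^{1-k},
\end{equation*}
provided $c_1X\geq X(k,\epsilon/2)$. Since the substitution is injective, the corresponding $(x,y)$ is a nonzero integer point of $[-X,X]^2$, and
\begin{equation*}
    \|f(x,y)\|=\|g(u,v)\|\leq c_1^{-\sigma+\epsilon/2}X^{-\sigma+\epsilon/2}<X^{-\sigma+\epsilon}
\end{equation*}
once $X$ exceeds a threshold $X(\epsilon,c_k,c_{k-1})$.

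\textbf{Main obstacle.} The natural rational substitution $x\mapsto x-(c_{k-1}/D)y$, which eliminates the $x^{k-1}y$ coefficient by completing the $k$-th power, is not integer-valued and therefore cannot be applied to a Diophantine problem on $\Z^2$ directly. Dilating $y$ by the denominator $D=kc_k$ repairs integrality at the price of a bounded constriction of the feasible range for $(u,v)$; the resulting constant is absorbed into the threshold $X(\epsilon,c_k,c_{k-1})$ and leaves the exponent $\sigma=(k/(k-1))2^{1-k}$ unaffected.
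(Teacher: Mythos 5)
Your proof is correct and follows essentially the same route as the paper's: the paper also uses the integral change of variables $x = x_1 - c_{k-1}y_1$, $y = kc_k\,y_1$ to annihilate the $x^{k-1}y$ coefficient, shrinks the ranges of $(x_1,y_1)$ by a constant depending on $k,c_k,c_{k-1}$ so that $|x|,|y|\leq X$, and then invokes Theorem~1.1 with $l=k-2$. Your version is slightly more explicit about verifying the cancellation of the $u^{k-1}v$ coefficient and about the degenerate case $c_k=0$, but these are elaborations of the same argument.
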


In [$\ref{ref4}$], Dietmann recorded that for sufficiently large $X,$ one has
\begin{equation*}
   \min_{\substack{0\leq |x|,|y|\leq X\\(x,y)\neq (0,0)}}\|\alpha x^3+\beta x^2y+\gamma xy^2+\delta y^3\|< X^{-1/4+\epsilon},
\end{equation*}
which is obtained by specializing $y=0$ and applying the earlier work of Danicic $[\ref{ref6}]$. In Corollary 1.2 with $k=3$, we obtain the superior exponent $\sigma=3/8$ by imposing the condition that $\alpha$ and $\beta$ have a rational ratio. By further restricting $\alpha,\beta,\gamma,\delta$ to have rational ratio to each other, Wooley [$\ref{ref13}$, Theorem 4] proved that this can be improved to $\sigma=1/2.$

\bigskip

Our next two theorems provide bounds useful when $k$ is larger by exploiting recent progress on Vinogradov's mean value theorem and earlier work applying smooth numbers. The statements of these results is facilitated by writing $$\rho(k,l)=2^2(k-1)+2^3(k-2)+\cdots+2^{l+1}(k-l),$$
and 
\begin{align}
    &l_0= \max\{l\in \N|\ 7\rho(k,l)\leq k(k-1)\}\\
    &l_1= \max\{l\in \N|\ 7\rho(k,l)\leq k\log k\},
\end{align}
where the number 7 here may be replaced by smaller positive numbers.
\begin{te}
Let $k$ and $l$ be natural numbers satisfying $1\leq l\leq l_0$ and $k\geq 3$, and consider real numbers $\alpha_k$ and $\alpha_j$ $(1\leq j\leq l)$. Then, for any $\epsilon>0$, there exists a real number $X(k,\epsilon)$ such that whenever $X\geq X(k,\epsilon)$ one has
\begin{equation}\label{4}
    \min_{\substack{0\leq x,y\leq X\\(x,y)\neq(0,0)}}\|\alpha_k x^k+\alpha_l x^ly^{k-l}+\alpha_{l-1}x^{l-1}y^{k-l+1}+\cdots+\alpha_0 y^k\|< X^{-\sigma+\epsilon},
\end{equation}
where $$\sigma=\frac{2}{k(k-1)+\rho(k,l)}$$
\end{te}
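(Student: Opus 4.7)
The plan is to establish the theorem via the Fourier-analytic approach of Davenport--Heilbronn, bootstrapping from bounds on the exponential sum
\[
S(h) = \sum_{0 \leq x, y \leq X} e(h \Psi(x, y)).
\]
The approach is parallel to that for Theorem 1.1, except that the classical Weyl inequality is replaced by stronger estimates that issue from the now-proved Main Conjecture in Vinogradov's mean value theorem (Bourgain--Demeter--Guth, Wooley), possibly enhanced by the smooth-number technology of Vaughan and Wooley.

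First I would carry out the standard Fourier reduction. Assuming toward contradiction that $\|\Psi(x,y)\| > \delta$ for all admissible $(x,y)$ with $\delta = X^{-\sigma + \epsilon}$, one expands a suitably smoothed indicator function of $[-\delta, \delta] \pmod 1$ as a Fourier series to conclude that $|S(h)| \gg X^2 \delta (\log X)^{-C}$ for some integer $h$ with $1 \leq |h| \leq \delta^{-1} (\log X)^{C}$. It then suffices to derive a contradictory upper bound on $|S(h)|$ of the shape $X^{2 - 2/(k(k-1) + \rho(k,l)) + \epsilon/2}$.

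To bound $|S(h)|$, I would apply a sequence of Cauchy--Schwarz (Weyl differencing) steps in the $y$ variable, exploiting the observation that the leading monomial $\alpha_k x^k$ is independent of $y$ and is therefore preserved as a phase throughout the procedure. At each step the differenced polynomial in $y$ has its degree reduced by one and the exponential sum is squared; the coefficients transform into finite differences in the $x$-variables of the shape $\alpha_j(x_1^j - x_2^j)$ together with their iterated analogues. After $l$ such steps, one is left with a sparse polynomial of degree $k-l$ in $y$, whose $l+1$ coefficients depend on $2^l$ auxiliary $x$-variables. At this final stage, I would apply the Vinogradov mean value theorem (in its recently-established full form) to the residual one-variable exponential sum, producing the Baker-type pointwise bound $X^{1 - 1/(k(k-1)) + \epsilon}$ under appropriate Diophantine hypotheses on $h\alpha_k$. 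Combining the $l$ Weyl-differencing losses with the Vinogradov gain then yields the desired upper bound on $|S(h)|$, completing the proof.

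The main obstacle will be the careful bookkeeping of exponents through the iterative argument: at the $j$-th Cauchy--Schwarz step one accrues a factor of $2^{j+1}$ from the repeated squarings and a factor $(k-j)$ from the degree of the remaining polynomial, together producing the cumulative sum $\rho(k, l) = \sum_{j=1}^{l} 2^{j+1}(k-j)$. The hypothesis $1 \leq l \leq l_0$, equivalent to $7\rho(k, l) \leq k(k-1)$, is precisely what guarantees that the Vinogradov gain $1/(k(k-1))$ dominates the cost of the Weyl differencings and keeps $\sigma$ positive. A secondary subtlety is the major-arc analysis: if $h\alpha_k$ is exceptionally well approximated by a rational with small denominator, then the Baker--Vinogradov pointwise bound is not directly available, and a small value of $\|\Psi\|$ must instead be produced via an auxiliary Dirichlet-type construction applied to the restricted polynomial.
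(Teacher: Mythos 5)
Your plan names the right ingredients at the headline level --- Weyl differencing, Vinogradov's mean value theorem, smooth numbers --- but the architecture you describe is not the one that makes the theorem work, and as written it has two fatal defects.

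The first concerns the direction of the differencing. You propose Weyl-differencing in the $y$ variable. But $\alpha_k x^k$ has degree zero in $y$, so a single $y$-shift annihilates it: there is no ``$\alpha_k x^k$ phase preserved throughout.'' Since the entire Vinogradov gain $1/(k(k-1))$ is supposed to come from the $\alpha_k x^k$ term, your scheme destroys the very object you intend to exploit. The paper's Lemma 3.1/3.2 in fact differences $l$ times in $x$: because every monomial other than $\alpha_k x^k$ has $x$-degree at most $l$, after $l$ differencings the phase splits additively into an $\alpha_k$-piece (degree $k-l$ in $x$) and an $\alpha_l$-piece (a pure $y^{k-l}$ term with no remaining $x$-dependence). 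This separation, not a $y$-differencing, is what lets the two coefficients be treated independently.

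The second, and more substantial, gap is structural. You reduce to bounding a single sum $S(h)$ and hope for a pointwise estimate under Diophantine hypotheses on $h\alpha_k$ alone. But the coefficients $\alpha_l,\ldots,\alpha_1$ must also be controlled: if, say, $\alpha_l$ sits on a major arc, $S(h)$ can be close to $X^2$ and no Weyl/VMVT bound conditioned on $\alpha_k$ can save you. The paper handles this through an $l$-step inductive reduction rather than a one-shot exponential-sum bound. At step $i$ it applies Lemma 2.1 together with the smooth-number estimate Lemma 3.2 on the box $[1,X]\times[1,Y_i]$ to force a rational approximation to the coefficient $\alpha_{l-i}^{(i)}$ with denominator $q\leq X^{2^{l-i+1}(k-l+i)\sigma}$; it then substitutes $y=qy_1$ with $y_1\leq Y_{i+1}=Y_iX^{-2^{l-i+1}(k-l+i)\sigma}$, rendering $\alpha_{l-i}^{(i)} x^{l-i} y^{k-l+i}$ negligible mod $1$ and peeling that term off. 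The quantity $\rho(k,l)=\sum_{j=1}^l 2^{j+1}(k-j)$ is exactly the cumulative exponent lost to the shrinking $y$-range $Y_l=X^{1-\rho(k,l)\sigma}$, not the bookkeeping of a single Weyl chain, and the condition $7\rho(k,l)\leq k(k-1)$ is needed both to keep $\sigma>0$ and, inside the induction, to verify the range hypotheses of Lemma 3.2 ($Z\leq\min\{(HX^{l-i})^{1/(2(k-l+i))},Y_i\}$). Only after all $l$ steps does one arrive at a diagonal form $\alpha_k x^k+\alpha_0^{(l)}y^k$ on $[1,X]\times[1,Y_l]$, to which Proposition 5.1 --- where Vinogradov's mean value theorem actually enters --- is applied. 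Your proposal omits this iterative mechanism entirely, and that mechanism is the essential content of the proof.
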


For comparison with earlier results, by combining the earlier work of Baker [$\ref{ref3}$, Theorem 3] and the trivial treatment mentioned above, one obtains
\begin{equation*}
   \min_{\substack{0\leq x,y\leq X\\(x,y)\neq(0,0)}}\|\alpha_k x^k+\alpha_l x^ly^{k-l}+\alpha_{l-1}x^{l-1}y^{k-l+1}+\cdots+\alpha_0 y^k\|<\min_{\substack{1\leq x\leq X}}\|\alpha x^k\|\leq X^{-\sigma+\epsilon},
\end{equation*}
where $\sigma=1/(k(k-1)).$ Notice that for all $l$ with $1\leq l\leq l_0,$ the conclusion of Theorem 1.3 is superior to this earlier result stemming from Baker [$\ref{ref3}$, Theorem 3]. For instance, in the case $l=1$, the earlier result, which is $\sigma=1/(k(k-1))$, can be improved to 
$$\sigma=\frac{2}{(k+2)(k-1)}.$$

\bigskip

\begin{te}
Let $k$ and $l$ be natural numbers satisfying $1\leq l\leq l_1$ and $k\geq 3$, and consider real numbers $\alpha_k$ and $\alpha_j$ $(1\leq j\leq l)$. Then, for any $\epsilon>0$, there exist a real number $X(k,\epsilon)$ such that whenever $X\geq X(k,\epsilon)$ one has 
\begin{equation}
     \min_{\substack{0\leq x,y\leq X\\(x,y)\neq(0,0)}}\|\alpha_k x^k+\alpha_l x^ly^{k-l}+\alpha_{l-1}x^{l-1}y^{k-l+1}+\cdots+\alpha_0 y^k\|< X^{-\sigma+\epsilon},
\end{equation}
where $$\sigma=\frac{2}{k\log k+\rho(k,l)+Ck\log\log k}$$
in which $C$ is a positive constant which may not depend on $k$.
\end{te}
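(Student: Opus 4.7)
The plan is to mirror the argument developed for Theorem 1.3, replacing only the one-variable exponential sum estimate invoked at the final stage: where Theorem 1.3 appeals to Baker's Weyl exponent $1/(k(k-1))$ obtained from Vinogradov's mean value theorem, the present proof will instead invoke Wooley's improvement $[\ref{ref12}]$, which supplies the exponent $1/(k\log k + Ck\log\log k)$ via exponential sums over $k$-smooth numbers.

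The argument begins by contradiction. Suppose that $\|\Psi(x,y)\| \geq X^{-\sigma+\epsilon}$ holds for every $(x,y) \in [0,X]^2 \setminus \{(0,0)\}$. A standard Fourier-analytic detector argument then produces a lower bound for
$$\sum_{1 \leq |q| \leq X^{\sigma-\epsilon}} |S(q)|, \qquad \text{where} \qquad S(q) = \sum_{0 \leq x,y \leq X} e\bigl(q\Psi(x,y)\bigr).$$
The next step is to obtain a matching upper bound on $|S(q)|$ by iterated Weyl differencing. Following the scheme adopted in the proof of Theorem 1.3, one differences with respect to $x$, which at each step kills one layer of the cross-monomials $\alpha_j x^j y^{k-j}$ while preserving a principal contribution of the shape $q\alpha_k c_{\mbd{h}} x^{k'}$, with $c_{\mbd{h}}$ a monomial in the differencing variables and $k' \leq k$. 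Aggregating the cost of these differencings together with the associated averages over $\mbd{h}$ produces the factor $\rho(k,l)$ appearing in the denominator of $\sigma$.

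At the last stage, one must bound exponential sums of the shape $T(\beta) = \sum_{x \leq X} e(\beta x^{k'})$ with $\beta = q\alpha_k c_{\mbd{h}}$. Here, in place of Baker's bound, one invokes Wooley's smooth-number estimate, which furnishes $|T(\beta)| \ll X^{1 - 1/(k\log k + Ck\log\log k) + \epsilon}$ whenever $\beta$ admits a suitable Diophantine approximation. Substituting this estimate into the upper bound obtained from Weyl differencing and comparing against the lower bound from the detector argument then yields the desired contradiction, precisely for $\sigma = 2/(k\log k + \rho(k,l) + Ck\log\log k)$. The constant $C$ in the statement absorbs the implied constant from Wooley's bound, and the restriction $l \leq l_1$ ensures that the $\rho(k,l)$ contribution does not overwhelm the dominant $k\log k$ term, so that the improvement over the trivial specialisation $y=0$ is genuine.

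The principal obstacle lies in the differencing stage. One must verify that, for a sufficiently dense set of $\mbd{h}$, the coefficient $c_{\mbd{h}}$ is nonzero and that $q\alpha_k c_{\mbd{h}}$ satisfies the rational-approximation hypothesis required by Wooley's bound, while showing that the degenerate diagonal contributions with $c_{\mbd{h}} = 0$ are negligible. Careful attention to how the ranges of the differencing variables interact with the smooth-numbers input of Wooley forms the technical heart of the proof.
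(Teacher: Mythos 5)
Your top-level observation is correct: Theorem 1.4 is proved by the same scaffolding as Theorem 1.3, with the final one-dimensional input (Baker's Weyl exponent $1/(k(k-1))$) upgraded to Wooley's smooth Weyl sum estimate $[\ref{ref12}]$. But your account of the mechanics does not match how the paper actually produces the factor $\rho(k,l)$, and the scheme you describe would not yield the stated exponent.

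You describe a single pass of iterated Weyl differencing in $x$ that strips off the cross monomials one layer at a time, aggregates the cost into $\rho(k,l)$, and leaves behind a one-variable sum $\sum_{x\le X} e(\beta x^{k'})$ to which Wooley's estimate is applied. That is not the structure of the argument, and the exponent bookkeeping would not come out right that way. In the paper, $\rho(k,l)$ arises from an \emph{inductive sequence of range reductions in the $y$ variable}: at the $i$-th step one has a polynomial with $l-i$ cross monomials, restricted to $0\le y\le Y_i$ with $Y_i = X^{1-(2^{l+1}(k-l)+\cdots+2^{l-i+2}(k-l+i-1))\sigma}$, and one uses Lemma 3.2 (whose $y$-sum is already over smooth numbers) to extract a rational approximation $q$ to the leading cross coefficient $\alpha_{l-i}^{(i)}$; the change of variable $y=qy_1$ with $y_1\le Y_{i+1}$ removes one monomial while shrinking the admissible $y$-range by $X^{2^{l-i+1}(k-l+i)\sigma}$. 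After $l$ steps, what remains is the \emph{two-variable} diagonal problem $\min\|\alpha_k x^k + \alpha_0^{(l)} y^k\|$ with $1\le x\le X$, $1\le y\le Y_l = X^{1-\rho(k,l)\sigma}$, and it is there — in Proposition 6.2, built from Wooley's smooth Weyl sum bound Proposition 6.1 — that the $k\log k + Ck\log\log k$ term enters. Balancing the savings from Proposition 6.2 against the loss $Y_l = X^{1-\rho(k,l)\sigma}$ is what yields $\sigma = 2/(k\log k + \rho(k,l) + Ck\log\log k)$. Crucially, Wooley's estimate applies to $\sum_{x\in\mathcal{A}(X,R)}e(\beta x^k)$, not to the full range $\sum_{x\le X}$ as in your sketch, so the restriction to smooth numbers has to be made explicitly in the diagonal endgame (and also enters via Lemma 3.2 at every inductive step), together with a case analysis showing one may assume $h\alpha, h\beta$ lie on minor arcs. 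Your identified ``principal obstacle'' about $c_{\mathbf{h}}$ vanishing is a real but minor point inside the proof of Lemma 3.1/3.2 and is not where the work of this theorem lies; the genuine work is verifying at each inductive step that $U = X^{2^{l-i+2}\sigma}$ and $Y_i$ satisfy the size hypotheses of Lemma 3.2, and verifying $Y_l\ge X^{\sigma_0}$ so Proposition 6.2 applies, which is where the constraint $l\le l_1$ is used.
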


For comparison with earlier results, by combining the earlier work of Wooley $[\ref{ref18}$, Theorem 1.2] and the trivial method mentioned above, one obtains
\begin{equation*}
   \min_{\substack{0\leq x,y\leq X\\(x,y)\neq(0,0)}}\|\alpha_k x^k+\alpha_l x^ly^{k-l}+\alpha_{l-1}x^{l-1}y^{k-l+1}+\cdots+\alpha_0 y^k\|<\min_{\substack{1\leq x\leq X}}\|\alpha x^k\|\leq X^{-\sigma+\epsilon},
\end{equation*}
where $\sigma=1/(k\log k+Ck\log\log k)$ in which $C$ is a positive constant which may not depend on $k$. Notice that for all $l$ with $1\leq l\leq l_1,$ the conclusion of Theorem 1.4 is superior to this earlier result stemming from Wooley [$\ref{ref18}$, Theorem 1.2].

The method underlying the proof of our main theorems is to exploit inductive arguments based on a classical argument widely used in studying fractional parts of polynomials. However, it is complicated by the need to obtain quantitative inequalities useful for each of the inductive steps. Thus, these and other technical complications may obscure the key ideas of our arguments, and so we sketch this inductive argument in Section 2. Then, in Section 3, we provide key lemmas about the quantitative inequalities required for the success of our arguments. In Sections 4, 5 and 6, by exploiting these key lemmas and the inductive arguments, we provide the proof of Theorem 1.1, 1.3 and 1.4, respectively. Throughout this paper, we use $\gg$ and $\ll$ to denote Vinogradov's well-known notation, and write $e(z)$ for $e^{2\pi iz}$. We adopt the convention that whenever $\epsilon$ and $\delta$ appear in a statement, then the statement holds for each $\epsilon,\delta>0$, with implicit constants depending on $\epsilon$ and $\delta$, respectively.

\section*{Acknowledgement}
The author acknowledges support from NSF grant DMS-2001549 under the supervision of Trevor Wooley. The author is grateful for support from Purdue University. The author would like to thank Trevor Wooley for careful reading and helpful comments.


\bigskip

\section{Outline of the proof}

In this paper, we mainly follow a classical argument relating fractional parts of polynomials to lower bounds of associated exponential sums [$\ref{ref1}$, Theorem 2.2]. We record here this theorem.
\begin{lem}
Let $x_1,\ldots,x_N$ be real numbers. Suppose that $\|x_n\|\geq H^{-1}$ for every $n$ with $1\leq n\leq X$. Then, 
\begin{equation*}
    \dsum_{1\leq h\leq H}\biggl|\dsum_{n=1}^Ne(hx_n)\biggl|\geq N/6
\end{equation*}
\begin{proof}
See [$\ref{ref1}$, Theorem 2.2].
\end{proof}
\end{lem}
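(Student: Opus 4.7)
The plan is to prove this classical lemma by a Fourier-analytic argument that converts the separation hypothesis $\|x_n\| \geq H^{-1}$ into a lower bound on the exponential sums by testing against a well-chosen trigonometric polynomial of degree at most $H$.

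Specifically, I would construct a real-valued trigonometric polynomial $K(x) = \sum_{|h| \leq H} \hat K(h) e(hx)$ satisfying: (i) its spectrum is supported in $[-H, H]$; (ii) $\hat K(0)$ is of order $1$ while $|\hat K(h)| \leq 1$ for $h \neq 0$; and (iii) the pointwise estimate $K(x) \leq \hat K(0) - c$ holds whenever $\|x\| \geq H^{-1}$, for an explicit constant $c > 0$. Suitable candidates are trigonometric polynomials of Fejer, Jackson, or Beurling-Selberg type, possibly truncated or combined; the goal is to produce a polynomial that is large near the integers and uniformly smaller than its mean on the complement of $[-H^{-1}, H^{-1}]$ modulo $1$.

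Given such a $K$, summing over $n$ and invoking the separation yields
\begin{equation*}
(\hat K(0) - c) N \geq \sum_{n=1}^N K(x_n) = \hat K(0) N + \sum_{1 \leq |h| \leq H} \hat K(h) \sum_{n=1}^N e(hx_n),
\end{equation*}
whence
\begin{equation*}
c N \leq \left|\sum_{1 \leq |h| \leq H} \hat K(h) \sum_{n=1}^N e(hx_n)\right| \leq 2 \max_{1 \leq h \leq H} |\hat K(h)| \cdot \sum_{h=1}^H \left|\sum_{n=1}^N e(hx_n)\right|,
\end{equation*}
producing a lower bound $\sum_h |\cdot| \geq c' N$ for an absolute constant $c'$.

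The main obstacle will be fine-tuning the kernel to achieve the specific numerical constant $1/6$ (as opposed to an unspecified absolute constant); this is a standard extremal problem in Fourier analysis on $\mathbb{R}/\mathbb{Z}$. An alternative, more direct route is to exploit the truncated Fourier expansion of the sawtooth function $\{x\} - 1/2 = -\sum_{h=1}^{\infty} \sin(2\pi h x)/(\pi h)$, bounding the tail via Abel summation to obtain a pointwise inequality valid for $\|x\| \geq H^{-1}$; summing this inequality over $n$ then delivers the lower bound with the explicit constant $1/6$.
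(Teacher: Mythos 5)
The paper offers no proof beyond the citation to Baker's book, so there is nothing in-text to compare against; the question is whether your sketch is sound. Your kernel strategy is indeed the standard route and, instantiated with the Beurling--Selberg \emph{minorant}, it does deliver the precise constant: taking Vaaler's form of Selberg's minorant $S^-$ of degree $\leq H$ for the indicator of $\{\|x\|<\delta\}$ with $\delta=H^{-1}$, one has $\hat S^-(0)=2\delta-\tfrac1{H+1}=\tfrac{H+2}{H(H+1)}$ and $|\hat S^-(h)|\leq \tfrac1{H+1}+\min(2\delta,\tfrac1{\pi h})\leq \tfrac1{H+1}+\tfrac2H$ for $1\leq h\leq H$; since $\|x_n\|\geq H^{-1}$ forces $S^-(x_n)\leq 0$, summing gives $\sum_{h\leq H}|\sum_n e(hx_n)|\geq N\hat S^-(0)/(2\max_h|\hat S^-(h)|)\geq N(H+2)/(2(3H+2))\geq N/6$, exactly as required. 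Two cautions on the other candidates you list: the degree-$H$ Fej\'er and Jackson kernels are \emph{not} suitable here, because they are nonnegative but not uniformly small on $\|x\|\geq H^{-1}$ — the degree-$H$ Fej\'er kernel attains values of size $\asymp H$ on that range, so the argument collapses (the resulting lower bound for $\sum_h|\cdot|$ becomes negative once $H\geq 4$); the crucial property is the \emph{sign} condition $S^-\leq 0$ off $[-\delta,\delta]$, not mere concentration, and this is precisely what the Beurling--Selberg construction provides that Fej\'er does not. Your sawtooth alternative is also doubtful as stated: $\{x\}-\tfrac12$ is odd with a jump at the integers, its truncated series suffers Gibbs oscillation of fixed size near $0$, and it is not concentrated near the integers, so the pointwise inequality you would need for $\|x\|\geq H^{-1}$ does not emerge from Abel summation in any obvious way; if you want a second route you would do better to use the triangle wave $\tfrac14-\|x\|$ together with a Fej\'er weight (i.e.\ an Erd\H{o}s--Tur\'an--type smoothing), though the constant then needs checking.
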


Suppose that for all $x,y$ with $x,y\in [1,X]\cap \Z$ one has
\begin{equation}\label{8''}
    \|\alpha_k x^k+\alpha_{l} x^ly^{k-l}+\alpha_{l-1}x^{l-1}y^{k-l+1}+\cdots+\alpha_0 y^k\| > 1/H.
\end{equation}
Then, by Lemma 2.1, one obtains
\begin{equation}\label{9'}
    \dsum_{1\leq h\leq H}|\dsum_{1\leq x,y\leq X}e(h(\alpha_k x^k+\alpha_{l} x^ly^{k-l}+\alpha_{l-1}x^{l-1}y^{k-l+1}+\cdots+\alpha_0 y^k))|\gg X^{2}.
\end{equation}

Our first goal is to obtain information about Diophantine approximations to $\alpha_l$ from the lower bound ($\ref{9'}$). In Section 3, we shall obtain upper bounds for the exponential sum on the left hand side of ($\ref{9'}$) in terms of $\alpha_k$ and $\alpha_l$.  
Combining these upper bound with the lower bound ($\ref{9'}$), one obtains the information concerning $\alpha_{l}.$

The next goal is to reduce the problem of bounding the fractional parts on the left hand side of ($\ref{8''}$) to the corresponding diagonal problem through inductive arguments. Note that by the triangle inequality, one has
\begin{multline}\label{8}
    \|\alpha_k x^k+\alpha_{l} x^ly^{k-l}+\alpha_{l-1}x^{l-1}y^{k-l+1}+\cdots+\alpha_0 y^k\|\\\leq \|\alpha_k x^k+\alpha_{l-1}x^{l-1}y^{k-l+1}+\cdots+\alpha_0 y^k\|+\|\alpha_l x^ly^{k-l}\|.
\end{multline}
By putting $y=qy'$ where $q$ is the denominator associated with a rational approximation to $\alpha_l,$ and again applying the triangle inequality, we see that the right hand side in ($\ref{8}$) is bounded above by
\begin{equation}\label{9}
 \|\alpha_k x^k+\alpha_{l-1}x^{l-1}(qy')^{k-l+1}+\cdots+\alpha_0 (qy')^k\|+x^l(qy')^{k-l-1}y'\|q\alpha_l\|.
\end{equation}
From the information about $\alpha_l$ obtained in the first step, one finds that $\|q\alpha_l\|$ is small enough so that the second term in $(\ref{9})$ is $\leq 1/(2H)$. Thus, the inequality ($\ref{8''}$) implies that for all $x,y'$ one has
\begin{equation*}
    \|\alpha_k x^k+\alpha_{l-1}x^{l-1}(qy')^{k-l+1}+\cdots+\alpha_0 (qy')^k\|\gg 1/H.
\end{equation*}
By applying this argument inductively, we ultimately infer from ($\ref{8''}$) that for all $x$ and $y_0$ one has
\begin{equation*}
    \|\alpha_k x^k+\alpha_0' y_0^k\|\gg 1/H
\end{equation*}
for all $x$ and $y_0.$
Experts will recognize that the diagonal polynomial here is accessible to classical methods.




\bigskip

\section{Exponential sums in two variables}

Let $A$ and $B$ be subsets of $[1,X]\cap \Z.$ Suppose that $$\|\alpha_k x^k+\alpha_{l} x^ly^{k-l}+\alpha_{l-1}x^{l-1}y^{k-l+1}+\cdots+\alpha_0 y^k\|\geq H^{-1}$$ for all $x\in A,\ y\in B.$ Then, by Lemma 2.1, one finds that
 \begin{equation}\label{11'}
     \dsum_{h\leq H}\biggl|\dsum_{\substack{x\in A\\y\in B}}e(h(\alpha_k x^k+\alpha_{l} x^ly^{k-l}+\alpha_{l-1}x^{l-1}y^{k-l+1}+\cdots+\alpha_0 y^k))\biggr|\gg |A||B|,
\end{equation}
where $|A|$ and $|B|$ are cardinalities of the sets $A$ and $B$, respectively.

Our goal in this section is to obtain upper bounds for the exponential sum above in terms of $\alpha_k$ and $\alpha_l$. To obtain these upper bounds, we introduce a variant of conventional Weyl differencing applicable to exponential sums in many variables, and exploit an argument used in earlier works on exponential sums in many variables (See, for example, [$\ref{ref10}$, Lemma 4.3]). To facilitate concision throughout this section, we define $g(x,y)=g(x,y;\boldsymbol{\alpha})\in \R[x,y]$ by
$$g(x,y)=\alpha_k x^k+\alpha_l x^ly^{k-l}+\alpha_{l-1}x^{l-1}y^{k-l+1}+\cdots+\alpha_0 y^k$$
with $l\leq k-2.$ Furthermore, we define differencing operators $\Delta_1^x$ and $\Delta_1^{y}$ by 
$$\Delta_1^x(g(x,y);h)=g(x+h,y)-g(x,y)$$
$$\Delta_1^y(g(x,y);h)=g(x,y+h)-g(x,y),$$
and so we define $\Delta_j^x$ and $\Delta_j^y$ for $j\geq 2$ recursively by means of the relations
$$\Delta_j^x(g(x,y);\mathbf{h})=\Delta_j^x(g(x,y);h_1,...,h_j)=\Delta_1^x(\Delta_{j-1}^x(g(x,y);h_1,...,h_{j-1});h_j)$$
$$ \Delta_j^y(g(x,y);\mathbf{h})=\Delta_j^y(g(x,y);h_1,...,h_j)=\Delta_1^y(\Delta_{j-1}^y(g(x,y);h_1,...,h_{j-1});h_j).$$
 
\bigskip

The following lemma provides an upper bound for the exponential sum in ($\ref{11'}$) with $A=[1,X]\cap \Z$ and $B=[1,Y]\cap \Z$. This upper bound is useful for small $k.$ In advance of the statement of this lemma, we define the exponential sum $T(\boldsymbol{\alpha})=T(\boldsymbol{\alpha}; H,X,Y)$ by 

$$T(\boldsymbol{\alpha})=\dsum_{1\leq h\leq H}\biggl|\dsum_{1\leq x\leq X}\dsum_{1\leq y\leq Y}e(hg(x,y))\biggr|.$$

\begin{lem}
Let $k$ and $l$ be natural numbers satisfying $1\leq l\leq k-2$ and $k\geq 3,$ and consider real numbers $\alpha_k$ and $\alpha_j\ (0\leq j\leq l)$. Suppose that $\alpha_k$ and $\alpha_{l}$ have rational approximations $|\alpha_k-a_1/q_1|\leq q_1^{-2}$ and $|\alpha_{l}-a_2/q_2|<q_2^{-2}$ with $(q_1,a_1)=(q_2,a_2)=1,$ respectively. Suppose that $X$ and $H$ are positive real numbers sufficiently large in terms of $k$. Then, for each $\epsilon>0$, one has
\begin{equation}\label{11}
   T(\boldsymbol{\alpha})\ll H(XY)^{1+\epsilon}\left(\frac{1}{q_1}+\frac{1}{X}+\frac{q_1}{HX^k}\right)^{2^{1-k}}\left(\frac{1}{q_2}+\frac{1}{Y}+\frac{q_2}{HX^{l}Y^{k-l}}\right)^{2^{1-k}}
\end{equation}
\end{lem}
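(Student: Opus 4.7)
The plan is a two-stage Weyl differencing argument exploiting the sparse structure of $g$.

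\textbf{Stage 1: separating the $x$- and $y$-dependence.} I would first difference $l$ times in the $x$-variable alone. By iterated Cauchy--Schwarz, this yields, for each fixed $h$,
\begin{equation*}
|S_h|^{2^l} \ll (XY)^{2^l-1}X^{-(l-1)} \sum_{\mathbf{h}^x\in[-X,X]^l}|V_l(\mathbf{h}^x)|,
\end{equation*}
where $S_h:=\sum_{x,y}e(hg(x,y))$ and $V_l(\mathbf{h}^x):=\sum_{x,y}e(h\Delta_l^xg(x,y;\mathbf{h}^x))$. The key structural observation is that $\Delta_l^x$ annihilates every polynomial in $x$ of degree less than $l$. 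Consequently the mixed monomials $\alpha_jx^jy^{k-j}$ with $0\le j<l$ in $g$ are killed; the lone surviving mixed monomial $\alpha_lx^ly^{k-l}$ reduces to $\alpha_l\,l!\prod_{i=1}^l h_i^x\cdot y^{k-l}$ (independent of $x$); and $\alpha_kx^k$ produces a polynomial $R_x(x;\mathbf{h}^x)$ in $x$ alone of degree $k-l$ with leading coefficient $\alpha_k\,k!/(k-l)!\prod h_i^x$. Thus $h\Delta_l^x g$ is a sum of a function of $x$ alone and a function of $y$ alone, and the two-variable sum factorises:
\begin{equation*}
V_l(\mathbf{h}^x)=P(\mathbf{h}^x)\cdot Q(\mathbf{h}^x),\quad P:=\sum_x e(hR_x),\quad Q:=\sum_y e\biggl(h\alpha_l\,l!\prod_{i=1}^l h_i^x\cdot y^{k-l}\biggr).
\end{equation*}

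\textbf{Stage 2: Weyl bounds and summation.} Both $P$ and $Q$ are univariate Weyl sums of degree $k-l$. I would apply a further $k-l-1$ standard Weyl differencings in $x$ to $P$ and in $y$ to $Q$ separately, producing
\begin{equation*}
|P(\mathbf{h}^x)|^{2^{k-l-1}}\ll X^{2^{k-l-1}-(k-l)}\sum_{\mathbf{h}^{x'}\in[-X,X]^{k-l-1}}\min\Bigl(X,\bigl\|k!h\alpha_k\textstyle\prod_i h_i^x\prod_j h_j^{x'}\bigr\|^{-1}\Bigr)
\end{equation*}
and the analogous bound for $|Q|^{2^{k-l-1}}$ involving $\|(k-l)!\,l!\,h\alpha_l\prod h_i^x\prod h_j^y\|^{-1}$ and $Y$ in place of $X$. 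Raising the Stage~1 inequality to the $2^{k-l-1}$-th power via H\"older's inequality and inserting these bounds, I obtain $|S_h|^{2^{k-1}}$ bounded by a multi-dimensional sum of products of two minimums. Summing over $h\le H$ and decoupling the $\alpha_k$- and $\alpha_l$-factors by Cauchy--Schwarz (in the variables $(h,\mathbf{h}^x)$ shared between the two sums), the problem reduces to classical Weyl-sum-of-minimum estimates under the substitutions $n=h\prod h_i^x\prod h_j^{x'}\in[1,HX^{k-1}]$ and $n=h\prod h_i^x\prod h_j^y\in[1,HX^lY^{k-l-1}]$, with divisor-function multiplicities absorbed into $(XY)^\epsilon$. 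The standard estimate for $\sum_n \min(X,\|n\alpha\|^{-1})$ then produces the clean factors $(1/q_1+1/X+q_1/(HX^k))$ and $(1/q_2+1/Y+q_2/(HX^lY^{k-l}))$, and taking the $2^{k-1}$-th root and summing $|S_h|$ over $h$ via H\"older yields the claimed bound.

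The hardest part will be the bookkeeping of exponents and divisor counts in Stage~2: the parameters $(h,\mathbf{h}^x)$ appear inside the minimums for \emph{both} $P$ and $Q$, and the Cauchy--Schwarz decoupling must be arranged so that both Weyl factors emerge with the common exponent $2^{1-k}$ without introducing unwanted powers of $X$ or $Y$. The careful orchestration of this decoupling, together with the correct handling of the range $HX^{k-1}$ (respectively $HX^lY^{k-l-1}$) for the substitution $n=h\prod h_i$, is the essence of the variant Weyl differencing mentioned in the paper's outline; it is precisely what enables the denominators in the two final factors to come out as $HX^k$ and $HX^lY^{k-l}$ rather than the weaker $X^{k-l}$ and $Y^{k-l}$ produced by a naive application of Weyl's inequality.
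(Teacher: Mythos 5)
Your plan is conceptually the same as the paper's: difference $l$ times in $x$ to annihilate the monomials $\alpha_j x^j y^{k-j}$ with $j<l$ and render $\Delta_l^x(\alpha_l x^l y^{k-l})$ independent of $x$, so that the inner sum factorises as $P(\mathbf{h}^x)\cdot Q(\mathbf{h}^x)$; then Weyl-difference the degree-$(k-l)$ sums $P$ and $Q$ separately and decouple. The one organizational difference is in Stage~2: the paper first separates the two factors by H\"older's inequality, bounding $\sum_{h,\mathbf{h}}|V_1||V_2|$ by $(HX^l)^{1-2^{2-k+l}}W_1^{2^{1-k+l}}W_2^{2^{1-k+l}}$ with $W_i=\sum|V_i|^{2^{k-l-1}}$, and only \emph{then} Weyl-differences each $W_i$; you propose instead to Weyl-difference $P,Q$ first and decouple afterwards by Cauchy--Schwarz in $(h,\mathbf{h}^x)$. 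The paper's order is cleaner, since decoupling before the final differencing keeps you working directly with $\sum\min(X,\|n\alpha\|^{-1})$, whereas your route produces $\bigl(\sum\min\bigr)^2$-type quantities that must then be flattened via $\min^2\le X\min$, costing an extra factor of $X$ that has to be tracked and cancelled; both can be made to work, but your version requires more careful bookkeeping. Also note a small slip in your Stage~1 display: the Weyl-differencing constant should be $Y^{2^l-1}X^{2^l-l-1}$ rather than $(XY)^{2^l-1}X^{-(l-1)}=X^{2^l-l}Y^{2^l-1}$, i.e.\ you have an extra factor of $X$ there as well.
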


\begin{proof}

By H\"older's inequality, one obtains 
\begin{equation*}
    T(\boldsymbol{\alpha})\ll \biggl(\dsum_{1\leq h\leq H}\dsum_{1\leq y\leq Y} 1\biggr)^{1-2^{-l}}\biggl(\dsum_{1\leq h\leq H}\dsum_{1\leq y\leq Y}\biggl|\dsum_{1\leq x\leq X}e(hg(x,y))\biggr|^{2^l}\biggr)^{2^{-l}}.
\end{equation*}
Then by Weyl differencing (see, for example, the proof of Vaughan [$\ref{ref11}$, Lemma 2.4]), we obtain
\begin{equation*}
    \begin{aligned}
          T(\boldsymbol{\alpha})\ll (HY)^{1-2^{-l}}\biggl(X^{2^l-(l+1)}\dsum_{1\leq h\leq H}\dsum_{1\leq y\leq Y}\dsum_{|h_1|\leq X}\cdots\dsum_{|h_l|}\dsum_{x\in I_l{(\boldsymbol{h}})}e(h\Delta_l^x(g(x,y),\mathbf{h}))\biggr)^{2^{-l}}
    \end{aligned}
\end{equation*}
where $I_l({\boldsymbol{h}})$ is an interval of integers contained in $[-X,X].$ Note here that $$\Delta_l^x(g(x,y);\mathbf{h})=\Delta_l^x(\alpha_k x^k)+\Delta_l^x(\alpha_lx^ly^{k-l})$$ and that $\Delta_l^x(\alpha_lx^ly^{k-l})$ does not depend on $x.$ Hence, one obtains
\begin{equation}\label{eq3.3}
    \begin{aligned}
          T(\boldsymbol{\alpha})\ll (HY)^{1-2^{-l}}X^{1-(l+1)2^{-l}}V(\alpha_k,\alpha_l)^{2^{-l}},
    \end{aligned}
\end{equation}
where $$V(\alpha_k,\alpha_l)=\dsum_{1\leq h\leq H}\dsum_{|h_1|\leq X}\cdots\dsum_{|h_l|\leq X}|V_1(\alpha_k;h,\mathbf{h})||V_2(\alpha_l;h,\mathbf{h})|$$ in which
$$V_1(\alpha_k;h,\mathbf{h})=\dsum_{x\in I_l}e(h\Delta_l^x(\alpha_k x^k,\mathbf{h}))$$ and
$$V_2(\alpha_l;h,\mathbf{h})=\dsum_{1\leq y\leq Y}e(h\Delta_l^x(\alpha_l x^ly^{k-l},\mathbf{h})).$$

By applying H\"older's inequality again,
\begin{equation}\label{3.3}
    \begin{aligned}
          V(\alpha_k,\alpha_l) \ll \biggl(\dsum_{1\leq h\leq H}\dsum_{|h_1|\leq X}\cdots\dsum_{|h_l|\leq X}1\biggr)^{1-2^{2-k+l}}W_1(\alpha_k)^{2^{1-k+l}}W_2(\alpha_l)^{2^{1-k+l}}
    \end{aligned}
\end{equation}
where 
$$W_1(\alpha_k)=\dsum_{1\leq h\leq H}\dsum_{|h_1|\leq X}\cdots\dsum_{|h_l|\leq X}|V_1(\alpha_k,h,\mathbf{h})|^{2^{k-l-1}},$$
$$W_2(\alpha_l)=\dsum_{1\leq h\leq H}\dsum_{|h_1|\leq X}\cdots\dsum_{|h_l|\leq X}|V_2(\alpha_l;h,\mathbf{h})|^{2^{k-l-1}}.$$

By applying classical Weyl differencing arguments, one has
\begin{equation*}
    \begin{aligned}
          W_1(\alpha_k)\ll X^{2^{k-l-1}-(k-l)}\dsum_{1\leq h\leq H}\dsum_{|h_1|\leq X}\cdots\dsum_{|h_{k-1}|\leq X}\biggl|\dsum_{x\in I_{k-1}(\boldsymbol{h})}e(c_khh_1\cdots h_{k-1}x)\biggr|
    \end{aligned}
\end{equation*}
where $c_k$ is a constant depending on $k,$ and $I_{k-1}(\boldsymbol{h})$ is an interval of integers contained in $[-X,X]$. Thus, for each $\epsilon>0,$ one has

\begin{equation*}
    \begin{aligned}
          W_1(\alpha_k)\ll HX^{2^{k-l-1}+l+\epsilon}\left(\frac{1}{q_1}+\frac{1}{X}+\frac{q_1}{HX^k}\right).
    \end{aligned}
\end{equation*}
Furthermore, one has
\begin{equation*}
    \begin{aligned}
          W_2(\alpha_l)
          \ll Y^{2^{k-l-1}-(k-l)}\dsum_{1\leq h\leq H}\dsum_{|h_1|\leq X}\cdots\dsum_{|h_l|\leq X}\dsum_{|h_{l+1}|\leq Y}\cdots\dsum_{|h_{k-1}|\leq Y}\biggl|\dsum_{y\in J_{k-1}(\boldsymbol{h})}e(d_khh_1\cdots h_{k-1}y)\biggr|
    \end{aligned}
\end{equation*}
where 
$d_k$ is a constant depending on $k,$ and $J_{k-1}(\boldsymbol{h})$ is an interval of integers contained in $[-Y,Y].$ Hence, for each $\epsilon,$ one has
$$W_2(\alpha_l)\ll HX^lY^{2^{k-l-1}+\epsilon}\left(\frac{1}{q_2}+\frac{1}{Y}+\frac{q_2}{HX^lY^{k-l}}\right).$$
Substituting these estimates for $W_1(\alpha_k)$ and $W_2(\alpha_l)$ into ($\ref{3.3}$) and from there into ($\ref{eq3.3}$), one concludes that
\begin{equation*}
    T(\boldsymbol{\alpha})\ll H(XY)^{1+\epsilon}\left(\frac{1}{q_1}+\frac{1}{X}+\frac{q_1}{HX^k}\right)^{2^{1-k}}\left(\frac{1}{q_2}+\frac{1}{Y}+\frac{q_2}{HX^{l}Y^{k-l}}\right)^{2^{1-k}}
\end{equation*}


%

\end{proof}

\bigskip

In order to describe Lemma 3.2, we introduce the set of smooth numbers $$\mathcal{A}(Y,R)=\{1\leq n\leq Y|\ p\ \textrm{prime}\ \textrm{and}\ p|n\Rightarrow p\leq R\}.$$ 
The following lemma provides an upper bound for the exponential sum in $(\ref{11'})$ with $A=[1,X]\cap \Z$ and $B=\mathcal{A}(Y,R)$ where $R=Y^{\eta}$ with sufficiently small $\eta>0$. 
To describe the following lemma, it is convenient to define an exponential sum $S(\boldsymbol{\alpha})=S(\boldsymbol{\alpha}; H,X,Y,R)$ by 

$$S(\boldsymbol{\alpha})=\dsum_{1\leq h\leq H}\biggl|\dsum_{1\leq x\leq X}\dsum_{y\in \mathcal{A}(Y,R)}e(hg(x,y))\biggr|$$
\begin{lem}
Let $k$ and $l$ be natural numbers satisfying $1\leq l\leq \lfloor k/2 \rfloor$ and $k\geq 3$, and consider real numbers $\alpha_k$ and $\alpha_j\ (0\leq j\leq l)$. Suppose that $X,\ Y$ and $H$ are positive real numbers sufficiently large in terms of $k$ with $Y\leq X$. 
Let $Z$ be a real number such that
$$1\leq Z\leq \min\{(HX^l)^{1/(2(k-l)}, Y\},$$
 and let $N$ be a real number with $Z^{\frac{1}{2}(k-l)}\leq N\leq HX^{l}Y^{k-l}Z^{-\frac{1}{2}(k-l)}$. Suppose that there exist $q\in \N$ and $a\in \Z$ satisfying $ q\leq N, (q,a)=1$ and $|q\alpha_l-a|\leq N^{-1}.$ Then, for each $\epsilon>0,$ one has
\begin{equation}\label{15'}
\begin{aligned}
   S(\boldsymbol{\alpha})\ll \frac{H(XY)^{1+\epsilon}}{(q+HX^lY^{k-l}|q\alpha_l-a|)^{1/(2^{l+1}(k-l))}}+H(XY)^{1+\epsilon}Z^{-1/2^{l+2}}.
\end{aligned}
\end{equation}
\end{lem}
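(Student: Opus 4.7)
The plan is to mirror the proof of Lemma 3.1 through the Hölder and Weyl differencing stages in $x$, and then replace the subsequent classical Weyl differencing in $y$ by a smooth Weyl sum estimate that exploits the restriction $y \in \mathcal{A}(Y,R)$. First I apply Hölder's inequality to promote the inner $x$-sum of $S(\boldsymbol{\alpha})$ to a $2^l$-th moment with the $y$- and $h$-summations outside, and then perform $l$ successive Weyl differencings in $x$. Every monomial $\alpha_j x^j y^{k-j}$ with $0 \leq j < l$ has $x$-degree below $l$ and is annihilated by $\Delta_l^x$, while $\alpha_l x^ly^{k-l}$ collapses to the $x$-free term $l!\,h_1\cdots h_l\,\alpha_l\,y^{k-l}$, and $\alpha_k x^k$ becomes a polynomial in $x$ of degree $k-l$. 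The inner exponential therefore factorises, yielding
\begin{equation*}
S(\boldsymbol{\alpha})^{2^l} \ll (HY)^{2^l - 1} X^{2^l - l - 1} \sum_{1\leq h\leq H} \sum_{|\mathbf{h}|\leq X} V_1(h,\mathbf{h})\, V_2^{\sharp}(h,\mathbf{h}),
\end{equation*}
in which $V_1(h,\mathbf{h}) = \sum_{x\in I_l(\mathbf{h})} e(h\Delta_l^x(\alpha_k x^k;\mathbf{h}))$ and $V_2^{\sharp}(h,\mathbf{h}) = \sum_{y\in \mathcal{A}(Y,R)} e(l!\,h\,h_1\cdots h_l\,\alpha_l\, y^{k-l})$.

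\medskip

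Because the hypotheses supply no Diophantine information on $\alpha_k$, I bound $|V_1(h,\mathbf{h})| \leq X$ trivially, reducing matters to an estimate for $\sum_{h,\mathbf{h}} |V_2^{\sharp}(h,\mathbf{h})|$. The decisive observation is that $V_2^{\sharp}$ depends on $(h,\mathbf{h})$ only through the product $n = h\,h_1\cdots h_l$ (up to sign), so the divisor bound $d_{l+1}(n) \ll n^{\epsilon}$ permits me to collapse the iterated sum into
\begin{equation*}
(HX^l)^{\epsilon} \sum_{1 \leq n \leq HX^l} |F(c_l \alpha_l n)|, \qquad F(\beta) := \sum_{y \in \mathcal{A}(Y,R)} e(\beta y^{k-l}),
\end{equation*}
with $c_l = l!$; the degenerate contributions from those $\mathbf{h}$ with some $h_i = 0$ are accommodated separately by crude bounds.

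\medskip

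The technical heart of the argument is a pointwise smooth Weyl sum estimate valid uniformly for $1 \leq n \leq HX^l$. Invoking a smooth-number variant of Vaughan's classical Weyl sum treatment (in the spirit of Wooley's refinements), and exploiting the factorisation $y = pz$ for $y \in \mathcal{A}(Y,R)$ with $p \leq R$, one obtains, after an iterative major/minor arc argument,
\begin{equation*}
|F(c_l \alpha_l n)| \ll Y^{1+\epsilon}\Bigl((q + HX^l Y^{k-l}|q\alpha_l - a|)^{-1/(2(k-l))} + Z^{-1/4}\Bigr).
\end{equation*}
The ranges $Z^{\frac12(k-l)} \leq N \leq HX^lY^{k-l}Z^{-\frac12(k-l)}$ and $1 \leq Z \leq \min\{(HX^l)^{1/(2(k-l))},Y\}$ are chosen precisely so that the Diophantine approximation hypothesis on $\alpha_l$ transfers uniformly to the rescaled coefficient $c_l \alpha_l n$ for every $n \leq HX^l$, with the amplification factor $HX^lY^{k-l}$ appearing correctly in the denominator.

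\medskip

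Finally, summing this pointwise estimate over $n \leq HX^l$, inserting the factor $X$ from the trivial bound on $V_1$, combining with the multiplier $(HY)^{2^l-1}X^{2^l-l-1}$ produced by Hölder and Weyl differencing, and extracting the $2^l$-th root yields the claimed two-term bound, with the intermediate $Z^{-1/4}$ becoming $Z^{-1/2^{l+2}}$. The principal obstacle is the pointwise smooth Weyl sum estimate in the third paragraph: verifying that the prescribed constraints on $N$ and $Z$ are exactly what is needed to drive the iterative smooth-number argument efficiently, and that the auxiliary multiplier $n$ does not spoil the transfer of the Diophantine hypothesis from $\alpha_l$ to $c_l\alpha_l n$, are the delicate points that require care.
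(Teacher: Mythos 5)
The proposal parts ways with the paper at the crucial juncture and introduces a false claim. After the Hölder step, Weyl differencing in $x$, and the divisor-bound collapse to $n=hh_1\cdots h_l$, you reduce matters to bounding $\sum_{n\leq HX^l}|F(c_l\alpha_l n)|$ with $F(\beta)=\sum_{y\in\mathcal{A}(Y,R)}e(\beta y^{k-l})$, and then you assert a \emph{pointwise} estimate
\begin{equation*}
|F(c_l\alpha_l n)|\ll Y^{1+\epsilon}\bigl((q+HX^lY^{k-l}|q\alpha_l-a|)^{-1/(2(k-l))}+Z^{-1/4}\bigr)
\end{equation*}
valid uniformly for every $n\leq HX^l$. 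This cannot hold: the right-hand side has no dependence on $n$, so it must bound the worst-case $n$. Take $\alpha_l=1/Q$ with $Q$ an integer of size roughly $Y$ (so $q=Q\asymp Y$, $a=1$, $q\alpha_l-a=0$). Then the claimed bound is of order $Y^{1-1/(2(k-l))+\epsilon}+Y^{1+\epsilon}Z^{-1/4}$, but for the admissible choice $n=Q/c_l\leq HX^l$ one has $c_l\alpha_l n=1$ and hence $F(c_l\alpha_l n)=|\mathcal{A}(Y,R)|\asymp Y$. No constraint in the lemma forces $Z\ll Y^{O(\epsilon)}$, so the bound fails. The Diophantine hypothesis on $\alpha_l$ alone, via a fixed $(q,a)$, simply does not control $F$ at the rescaled argument $c_l\alpha_l n$ for every individual $n$.

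The paper never attempts such a pointwise bound. Instead it (i) first invokes Proposition 3.3 to factor the smooth variable as $y=v_1\cdots v_r u$ with $v_i$ in controlled ranges and $u\in\mathcal{A}(Z,R)$, (ii) after Hölder and Weyl differencing in $x$, applies Cauchy's inequality in $n$ and the $v_i$ and \emph{squares out} the $u$-sum, producing a bilinear form in $(u_1,u_2)$ carrying the structured phase $(u_1^{k-l}-u_2^{k-l})$, and (iii) feeds that form to Proposition 3.4, whose iterative Dirichlet-approximation argument over the $v_i$ variables followed by a $\gcd$/divisor refinement over $(u_1,u_2)$ and finally a transference back to the original pair $(q,a)$ is where the Diophantine hypothesis is actually exploited. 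The saving $Z^{-1/2^{l+2}}$ emerges from the diagonal $u_1=u_2$ contribution in the squared-out sum, a Cauchy--Schwarz phenomenon that a pointwise bound cannot see. Your proposal omits both the Proposition 3.3 factorisation and the Cauchy/square-out step, and these are precisely what make the Diophantine transfer to the auxiliary multipliers feasible \emph{on average}; the ``obstacle'' you flag at the end is not a technicality but the heart of the proof, and the pointwise estimate you posit to bypass it is false as stated.
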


To prove Lemma 3.2, we shall use following two propositions.

\bigskip

\begin{prop}
Let $f$ be an arithmetic function. Suppose that $Y,M_1,\ldots,M_r$ and $R$ satisfy $1\leq R<Y$, $1\leq M_i<Y\ (1\leq i\leq r)$ and $M_1\cdots M_r<Y.$ Then, one has
\begin{equation*}
    \dsum_{y\in \mathcal{A}(Y,R)}f(y)\ll F(Y,M_1,\ldots,M_r,R)+G(Y,M_1,\ldots,M_r,R),
\end{equation*}
where
\begin{align*}
&\qquad\quad F(Y,M_1,\ldots,M_r,R)
=\dsum_{M_1<v_1\leq M_1R}\cdots\dsum_{M_r<v_r\leq M_rR}\dsum_{u\in\mathcal{A}(Y/(M_1\cdots M_r),R)}\left|f(v_1\cdots v_ru)\right|\\ &\textrm{and} \\
&\qquad\quad G(Y,M_1,\ldots,M_r,R)=\dsum_{j=1}^r M_1\cdots M_jR^{j-1}\sup_{1\leq y\leq M_j}|f(y)|.
\end{align*}

\end{prop}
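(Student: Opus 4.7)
The plan is to iteratively peel off divisors of each $y \in \mathcal{A}(Y,R)$ drawn from prescribed dyadic ranges, by repeated appeal to the classical observation that if $y \in \mathcal{A}(Y,R)$ satisfies $y > M$ then $y$ possesses a divisor in the interval $(M, MR]$. To verify this, I would take $d$ to be the smallest divisor of $y$ strictly exceeding $M$ and let $p$ be the smallest prime factor of $d$: by minimality one has $d/p \leq M$, and since $y$ is $R$-smooth one has $p \leq R$, whence $M < d \leq pM \leq MR$.

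With this divisor-selection lemma in hand, for each $y \in \mathcal{A}(Y,R)$ I would execute the following extraction procedure. Set $y_0 = y$, and for $j = 1, \ldots, r$ in succession, inspect $y_{j-1}$: if $y_{j-1} \leq M_j$, terminate the procedure and assign $y$ to the error class $E_j$; otherwise choose any divisor $v_j \mid y_{j-1}$ with $v_j \in (M_j, M_j R]$ (which exists by the preceding lemma, as $y_{j-1}$ is $R$-smooth and exceeds $M_j$), set $y_j = y_{j-1}/v_j$, and proceed. If the procedure runs to completion, then $y$ is assigned to the main class $E_0$ and admits the factorization $y = v_1 \cdots v_r u$ with $v_i \in (M_i, M_i R]$ and $u = y_r \in \mathcal{A}(Y/(M_1 \cdots M_r), R)$; note the assumption $M_1 \cdots M_r < Y$ ensures this residue range is nontrivial.

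Splitting the original sum according to the partition $\mathcal{A}(Y,R) = E_0 \cup E_1 \cup \cdots \cup E_r$ reduces the problem to estimating each piece. The contribution from $E_0$ is majorized by $F(Y, M_1, \ldots, M_r, R)$: the free sum defining $F$ overcounts each $y \in E_0$ according to the number of admissible factorizations arising from the extraction, but this multiplicity is bounded by a divisor-function quantity that is absorbed by the $\ll$-notation. For the $j$-th error class, each $y \in E_j$ is parametrized by a tuple $(v_1, \ldots, v_{j-1}, y_{j-1})$ in which $v_i \in (M_i, M_i R]$ and $y_{j-1} \leq M_j$, producing at most $M_1 \cdots M_j R^{j-1}$ tuples; estimating $|f(y)|$ trivially by the supremum $\sup_{1 \leq y \leq M_j} |f(y)|$ then yields the $j$-th summand of $G$.

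The principal technical point I expect will require careful handling is the multiplicity analysis for the $E_0$ contribution, which must produce a uniform constant rather than a divisor-function loss in order for the free sum over $(v_1,\ldots,v_r,u)$ to genuinely majorize $\sum_{y\in E_0}|f(y)|$ with only an absolute implicit constant, and the careful bookkeeping needed to align the range in the supremum defining $G$ with the factor $y_{j-1}$ arising at each termination step.
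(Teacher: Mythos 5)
Your proposal follows the same divisor-extraction argument as the paper: the paper proves one step (peel off $v_1\in(M_1,M_1R]$ or collect the residual class $y\leq M_1$) and then says ``inductively applying this relation, the conclusion follows,'' and your explicit partition $\mathcal{A}(Y,R)=E_0\cup E_1\cup\cdots\cup E_r$ is exactly what unwinding that induction gives. Of the two concerns you flag at the end, only the second is substantive. The first — that overcounting of $y\in E_0$ in $F$ might cost a divisor-function factor — is not an issue at all: the summands of $F$ are nonnegative and the extraction assigns to each $y\in E_0$ at least one tuple $(v_1,\ldots,v_r,u)$ with $y=v_1\cdots v_r u$, so $\sum_{y\in E_0}|f(y)|\leq F$ with implied constant $1$; extra tuples only enlarge $F$, which is harmless for an upper bound. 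The second concern is genuine but is also present in the paper's own statement: an element $y\in E_j$ factors as $y=v_1\cdots v_{j-1}y_{j-1}$ with $y_{j-1}\leq M_j$, so $y$ itself can be as large as $M_1\cdots M_jR^{j-1}$, and the supremum in $G$ should properly run over $1\leq y\leq M_1\cdots M_jR^{j-1}$ (or the term should be retained as $\sum_{v_1,\ldots,v_{j-1}}M_j\sup_{u\leq M_j}|f(v_1\cdots v_{j-1}u)|$) rather than over $1\leq y\leq M_j$. In the one place the paper uses this proposition (the proof of Lemma 3.2) the bound invoked is $\sup_y|f(y)|\leq X$, uniform in $y$, so the discrepancy is harmless there; but as a freestanding statement $G$ should be read with the larger range, and your proof is only complete once you make that adjustment.
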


\begin{proof}
Observe that if $y\in \mathcal{A}(Y,R)$ and $y>M_1,$ then there is a divisor $v_1$ of $y$ satisfying $M_1<v_1\leq M_1R$. Moreover, one then has $y/v_1\in \mathcal{A}(Y/M_1,R).$ Hence

\begin{align*}
    \dsum_{y\in \mathcal{A}(Y,R)}f(y)&\ll\dsum_{M_1<v_1\leq M_1R}\dsum_{u\in \mathcal{A}(Y/M_1,R)}|f(v_1u)|+\dsum_{y\leq M_1}|f(y)|\\
    &\ll     \dsum_{M_1<v_1\leq M_1R}\dsum_{u\in \mathcal{A}(Y/M_1,R)}|f(v_1u)|+M_1\sup_{1\leq y\leq M_1}|f(y)|.
\end{align*}
Inductively applying this relation, the conclusion follows.\end{proof}

\bigskip

Proposition 3.4 provides the upper bound for a certain type of exponential sums in many variables. To prove Proposition 3.4, we follow the argument in the proof of  [$\ref{ref10}$, Lemma 4.3]. To describe the following Proposition, it is convenient to define an exponential sum $\Xi(\beta)=\Xi(\beta; \boldsymbol{V}, U, L)$ by
$$\Xi(\beta)=\dsum_{v_1,\ldots,v_r}\dsum_{u_1,u_2}\biggl|\dsum_{1\leq x\leq L}e(\beta(v_1\cdots v_r)^{k-l}(u_1^{k-l}-u_2^{k-l})x) \biggr|,$$
where the summation is over integers satisfying
\begin{equation}\label{lab3.6}
    1\leq v_i\leq V_i\ (1\leq i\leq r)\ \textrm{and}\ U/2< u_1,u_2\leq U.
\end{equation}

\begin{prop}
Let $k$ and $l$ be natural numbers satisfying $1\leq l\leq \lfloor\frac{k}{2} \rfloor$ and $k\geq 3.$ Suppose that $L$ is a positive real number sufficiently large in terms of $k$. Let $U,V_1,\ldots, V_r$ be positive numbers satisfying
\begin{equation}\label{con3.7}
\begin{aligned}
&1\leq U\leq \tfrac{1}{2}(LV_1^{k-l}\cdots V_r^{k-l})^{1/(2(k-l)-1)}\\
  &1\leq V_r<\tfrac{1}{2}L^{1/(k-l)}U^{-\frac{1}{2(k-l)}}\\ 
  &1\leq V_i< \tfrac{1}{2}L^{1/(k-l)}V_{i+1}\cdots V_{r}U^{-1}\  (1\leq i\leq r-1).
\end{aligned}
\end{equation}
 Let $\beta\in\R$, $Q=V_1^{k-l}\cdots V_r^{k-l}U^{k-l}L$ and $N$ be a real number with $U^{\frac{1}{2}(k-l)}\leq N\leq QU^{-\frac{1}{2}(k-l)}$. Suppose that there exist $q\in \N$ and $a\in \Z$ satisfying $ q\leq N, (q,a)=1$ and $|q\beta-a|\leq N^{-1}.$ Then, for each $\epsilon>0,$ one has
\begin{equation}\label{20}
\begin{aligned}
 \Xi(\beta)\ll \frac{V_1\cdots V_rU^2L^{1+\epsilon}}{(q+Q|q\beta-a|)^{1/(k-l)}}+V_1\cdots V_rL^{1+\epsilon}U^{3/2},
\end{aligned}
\end{equation}
where the implicit constant may depend on $r.$

\end{prop}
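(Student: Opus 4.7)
The plan is to adapt the proof of [$\ref{ref10}$, Lemma 4.3]. First, I would apply the standard geometric sum bound $|\sum_{1\leq x\leq L} e(\gamma x)|\ll \min(L, \|\gamma\|^{-1})$ to the innermost sum in $\Xi(\beta)$, which yields
\[ \Xi(\beta) \ll \sum_{v_1,\ldots,v_r}\sum_{u_1,u_2} \min\left(L,\, \|\beta v^{k-l}(u_1^{k-l}-u_2^{k-l})\|^{-1}\right), \]
where $v=v_1\cdots v_r$. Splitting the $(u_1,u_2)$-sum according to whether $u_1=u_2$ or $u_1\neq u_2$, the diagonal contribution is $\ll V_1\cdots V_r\cdot U\cdot L$, which is dominated by the second term $V_1\cdots V_r L^{1+\epsilon}U^{3/2}$ in the claimed bound since $U\geq 1$.

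For the off-diagonal contribution, I would set $n=u_1^{k-l}-u_2^{k-l}$, so $|n|\leq(k-l)U^{k-l}$, and invoke the factorization
\[ u_1^{k-l}-u_2^{k-l}=(u_1-u_2)\sum_{j=0}^{k-l-1}u_1^j u_2^{k-l-1-j} \]
together with the divisor bound to show that each nonzero integer $n$ admits $\ll U^\epsilon$ representations as $u_1^{k-l}-u_2^{k-l}$ with $(u_1,u_2)\in (U/2,U]^2$: namely, for each divisor $d$ of $n$ we fix $u_1-u_2=d$, and then $u_1$ is a root of a polynomial of degree $k-l-1$. This reduces the off-diagonal problem to estimating
\[ U^\epsilon \sum_{v_1,\ldots,v_r}\sum_{0<|n|\leq(k-l)U^{k-l}}\min(L,\, \|\beta v^{k-l} n\|^{-1}). \]

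The third step is to apply the classical one-variable estimate $\sum_{|n|\leq M}\min(L,\|\alpha n\|^{-1})\ll (M/q'+1)(L+q'\log q')$, with $a'/q'$ a rational approximation to $\alpha=\beta v^{k-l}$ satisfying $(a',q')=1$ and $|\alpha-a'/q'|\leq 1/q'^2$. Such an approximation exists by Dirichlet's theorem, and its denominator can be tracked via the hypothesis $|q\beta-a|\leq N^{-1}$: writing $\beta v^{k-l}=av^{k-l}/q+\theta v^{k-l}/q$ with $|\theta|\leq 1/N$, one obtains $q'=q/\gcd(q,av^{k-l})$ with error $\leq v^{k-l}/(qN)$, and the constraints (\ref{con3.7}) on $U$ and the $V_i$ are designed precisely so that this error remains within admissible range for the standard estimate when averaged over the ranges (\ref{lab3.6}).

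The main obstacle I anticipate is producing the specific form $(q+Q|q\beta-a|)^{-1/(k-l)}$ in the final bound. This requires a delicate combination of the above one-variable estimate with a H\"older-type average over the product $v=v_1\cdots v_r$, leveraging the hierarchy of conditions in (\ref{con3.7}) to balance the major-arc and minor-arc regimes. The $(k-l)$-th root emerges because the effective range of $n$ has size $\asymp U^{k-l}$ while the underlying denominator of the approximation to $\beta$ itself is only $q$; distributing the save over the $r$ smooth components $v_i$ and then consolidating is the combinatorial heart of the argument. I expect this bookkeeping to closely mirror the scheme in [$\ref{ref10}$, Lemma 4.3], with the adaptation essentially cosmetic once the correct parameterization is in place.
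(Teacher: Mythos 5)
Your opening step (geometric sum bound followed by a diagonal/off-diagonal split) matches the paper's, and the observation that the diagonal contributes $\ll V_1\cdots V_r LU$ is correct. After that, however, your route diverges in a way that leaves the central difficulty unresolved.

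The key problem is that you immediately collapse $v_1\cdots v_r$ into a single variable $v$ and reduce the $(u_1,u_2)$-sum to a sum over $n=u_1^{k-l}-u_2^{k-l}$ via the divisor bound. This discards precisely the structure that drives the proof. The paper never performs that collapse. Instead it runs an $r$-step inductive descent: at step $i$ one obtains, via Dirichlet's theorem, an approximation $a_{r-i}/q_{r-i}$ to $\beta(v_1\cdots v_{r-i})^{k-l}(u_1^{k-l}-u_2^{k-l})$ with $q_{r-i}\leq\mathcal R_{r-i}$, shows using the hypotheses (\ref{con3.7}) that the previous denominator $q_{r-i+1}$ divides $q_{r-i}$ and that the quotient $q_{r-i}/q_{r-i+1}$ divides $v_{r-i}^{k-l}$, and then factors that quotient as $q_{1}q_2^2\cdots q_{k-l}^{k-l}$ to conclude $q_1\cdots q_{k-l}\mid v_{r-i}$. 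Summing over $v_{r-i}$ along that arithmetic progression, via the estimates in Appendix A, is exactly where one gain of $q_{r-i}^{1/(k-l)}$ is extracted, and iterating over all $r$ levels is what produces the $1/(k-l)$ exponent in the final bound. In other words, the exponent $1/(k-l)$ is manufactured by a ``de-powering'' of denominators across the $v_i$, not by a Weyl-type estimate for a single $n$-sum; the classical bound $\sum_n\min(M/q'+1)(L+q'\log q')$ you invoke does not come in that shape and cannot be massaged into $(q+Q|q\beta-a|)^{-1/(k-l)}$ by an averaging argument of the kind you gesture at.

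Even if one could extract the correct exponent from your $n$-sum, there is a second independent gap: once all the $v_i$ have been dispatched, the paper still has to handle the residual two-variable sum over $(u_1,u_2)$. It does so by parametrizing $j=(u_1,u_2)$, $m=(u_1-u_2)/j$, $n=u_2/j$, introducing $D=((n+m)^{k-l}-n^{k-l})/m$, and counting solutions of the congruence $(n+m)^{k-l}\equiv n^{k-l}\pmod{s_1 m}$ via the number of $(k-l)$-th roots of unity modulo $s_1 m$. This Cauchy-plus-congruence-counting block, which ultimately produces the $U^{3/2}$ in the secondary term, has no counterpart in your sketch. Your final paragraph acknowledges that producing the stated bound is ``the combinatorial heart of the argument'' and defers it to a presumed analogy with the cited lemma, but that is exactly the content that must be supplied; as written the proposal does not constitute a proof.
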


\begin{proof}
By summing over $x$ in the inner sum of $\Xi(\beta)$, one has

\begin{equation}\label{18'}
   \Xi(\beta) \ll  \dsum_{v_1,\ldots,v_r}\dsum_{u_1,u_2}\frac{L}{1+L\|\beta (v_1\cdots v_r)^{k-l}(u_1^{k-l}-u_2^{k-l})\|},
\end{equation}
where the summations are over $v_1,\ldots,v_r,u_1,u_2$ satisfying $(\ref{lab3.6}).$ Plainly, one may restrict the summation to satisfy
\begin{equation}\label{22}
   U/2\leq u_2<u_1\leq U\ \textrm{and} \ \|\beta(v_1\cdots v_r)^{k-l}(u_1^{k-l}-u_2^{k-l})\|< L^{-1}U^{1/2}.
\end{equation}
Indeed, the contribution on the right hand side of ($\ref{18'}$) of the remaining summands is $O(V_1\cdots V_rLU^{3/2}).$ 
For given $u_1,u_2,v_1,\ldots,v_r$, we may choose $n_0$ so that
$$\|\beta (v_1\cdots v_r)^{k-l}(u_1^{k-l}-u_2^{k-l})\|=|\beta(v_1\cdots v_r)^{k-l}(u_1^{k-l}-u_2^{k-l})-n_0|.$$
Let $\mathcal{R}_{r-1}=(LV_r^{k-l}U^{-1/2})^{1/2}$. By Dirichlet's approximation theorem, there exist $q_{r-1}\in \N$ and $\ a_{r-1}\in \Z$  with  $q_{r-1}\leq \mathcal{R}_{r-1}$ and $(q_{r-1},a_{r-1})=1$ such that 
\begin{equation}\label{22'}
    |\beta(v_1\cdots v_{r-1})^{k-l}(u_1^{k-l}-u_2^{k-l})q_{r-1}-a_{r-1}|<1/\mathcal{R}_{r-1}.
\end{equation}
If $a_{r-1}=0$, then $q_{r-1}=1.$ Thus, on recalling $(\ref{con3.7})$, it follows from ($\ref{22}$) and ($\ref{22'}$) that
\begin{equation*}
    \left|\frac{n_0}{v_r^{k-l}}-\frac{a_{r-1}}{q_{r-1}}\right|v_r^{k-l} q_{r-1}<\frac{\mathcal{R}_{r-1}U^{1/2}}{L}+\frac{V_r^{k-l}}{\mathcal{R}_{r-1}}<1.
\end{equation*}
 Thus, one has $q_{r-1}n_0=v_r^{k-l}a_{r-1}$. If $a_{r-1}=0$, then $q_{r-1}=1.$ Hence, in all cases, $q_{r-1}|v_r^{k-l}.$ Let $q_{r-1}=q_{1,r-1}q_{2,r-1}^2\cdots q_{k-l,r-1}^{k-l}$ where $q_{k-l,r-1}$ is maximal and $q_{1,r-1},\ldots,q_{k-l-1,r-1}$ are squarefree and coprime in pairs. Then, $q_{1,r-1}\cdots q_{k-l,r-1}|v_r$. Thus, by writing $$v_r=q_{0,r-1}q_{1,r-1}\cdots q_{k-l,r-1},$$ the bound ($\ref{18'}$) may be replaced by
\begin{equation}\label{22''}
\begin{aligned}
    &  \dsum_{v_1,\ldots,v_{r-1}}\dsum_{u_1,u_2}\dsum_{q_{0,r-1}\leq V_r/(q_{1,r-1}\cdots q_{k-l,r-1})}\frac{L}{Z_0}+O(V_1\cdots V_r LU^{3/2}),
\end{aligned}
\end{equation}
where $$Z_0=1+L(q_{0,r-1}\cdots q_{k-l,r-1})^{k-l}|\beta (v_1\cdots v_r)^{k-l}(u_1^{k-l}-u_2^{k-l})-a_{r-1}/q_{r-1}|.$$
Since we have $q_{r-1}\leq (q_{1,r-1}q_{2,r-1}\cdots q_{k-l,r-1})^{k-l}$, by applying the upper bound (1) from Appendix A, the inner sum of $(\ref{22''})$ is
\begin{equation}\label{24}
\begin{aligned}
  &\ll  \frac{LV_r}{q_{1,r-1}\cdots q_{k-l,r-1}(1+LV_r^{k-l}|\beta (v_1\cdots v_{r-1})^{k-l}(u_1^{k-l}-u_2^{k-l})-a_{r-1}/q_{r-1}|)^{1/(k-l)}}  \\
  &\ll \frac{LV_r}{(q_{r-1}+LV_r^{k-l}|q_{r-1}\beta (v_1\cdots v_{r-1})^{k-l}(u_1^{k-l}-u_2^{k-l})-a_{r-1}|)^{1/(k-l)}}.
\end{aligned}
\end{equation}
Thus, by substituting the bound ($\ref{24}$) into the inner sum in ($\ref{22''}$), the first term in ($\ref{22''}$) is
\begin{equation}\label{eq3.14}
   \ll \dsum_{v_1,\ldots,v_{r-1}}\dsum_{u_1,u_2}\frac{LV_r}{(q_{r-1}+LV_r^{k-l}|q_{r-1}\beta (v_1\cdots v_{r-1})^{k-l}(u_1^{k-l}-u_2^{k-l})-a_{r-1}|)^{1/(k-l)}}
\end{equation}

We analyse the expression ($\ref{eq3.14}$) by an inductive arguments similar to the argument first described. At the $i$th step of this argument, with $1\leq i\leq r,$ one finds that \begin{equation}\label{eq3.15}
\Xi(\beta) \ll B_i+O(V_1\cdots V_rL^{1+\epsilon}U^{3/2}),
\end{equation} where
\begin{equation}\label{eq3.16}
    B_i=\dsum_{v_1,\ldots, v_{r-i}}\dsum_{u_1,u_2} \frac{LV_rV_{r-1}\cdots V_{r-i+1}(\log L)^i}{(q_{r-i}+LV_r^{k-l}\cdots V_{r-i+1}^{k-l}|q_{r-i}\beta(v_1\cdots v_{r-i})^{k-l}(u_1^{k-l}-u_2^{k-l})-a_{r-i}|)^{1/(k-l)}}
\end{equation}
in which $q_{r-i}\in \Z$ and $a_{r-i}\in \N$ satisfy $(q_{r-i},a_{r-i})=1,$ 
$$q_{r-i}\leq \mathcal{R}_{r-i}=\left\{\begin{array}{l}
    L^{1/2}(V_r\cdots V_{r-i+1})^{(k-l)/2}\ \ \ \ \ \ \ \  \ \ \textrm{when} \ \ i\geq 2\\
    L^{1/2}V_r^{(k-l)/2}U^{-1/4}\ \ \ \ \ \ \ \ \ \ \ \ \ \ \ \  \ \  \textrm{when} \ \ i=1.
    \end{array}
    \right.$$ 
and
$$|\beta(v_1\cdots v_{r-i})^{k-l}(u_1^{k-l}-u_2^{k-l})q_{r-i}-a_{r-i}|<1/\mathcal{R}_{r-i}.$$

The case $i=1$ obviously follows by ($\ref{22'}$), ($\ref{22''}$) and ($\ref{eq3.14}$). Assume that $(\ref{eq3.15})$ holds for a particular $i$ with $i\leq r-1.$ We shall show that ($\ref{eq3.15}$) holds for $i+1.$
Plainly, one may restrict the summation in $(\ref{eq3.16})$ to satisfy
\begin{equation}\label{25}
q_{r-i}+LV_r^{k-l}\cdots V_{r-i+1}^{k-l}|q_{r-i}\beta(v_1\cdots v_{r-i})^{k-l}(u_1^{k-l}-u_2^{k-l})-a_{r-i}|< U^{\frac{1}{2}(k-l)}.
\end{equation}
Indeed, the contribution of the remaining summands is $ O(V_1\cdots V_rLU^{3/2}).$
Let $\mathcal{R}_{r-i-1}=L^{1/2}(V_r\cdots V_{r-i})^{(k-l)/2}.$ By Dirichlet's approximation theorem, there exist $q_{r-i-1}\in \N,a_{r-i-1}\in \Z$ with $q_{r-i-1}\leq \mathcal{R}_{r-i-1}$ and $(q_{r-i-1},a_{r-i-1})=1$ such that 
\begin{equation}\label{26}
    |\beta(v_1\cdots v_{r-i-1})^{k-l}(u_1^{k-l}-u_2^{k-l})q_{r-i-1}-a_{r-i-1}|<1/\mathcal{R}_{r-i-1}.
\end{equation}
Notice that if $a_{r-i-1}=0$ then $q_{r-i-1}=1.$ Thus, on recalling  $(\ref{con3.7})$, it follows from $(\ref{25})$ and $(\ref{26})$ that
\begin{equation*}
    \left|\frac{a_{r-i}}{q_{r-i}v_{r-i}^{k-l}}-\frac{a_{r-i-1}}{q_{r-i-1}}\right|q_{r-i}v_{r-i}^{k-l}q_{r-i-1}<\frac{\mathcal{R}_{r-i-1}U^{\frac{k-l}{2}}}{LV_r^{k-l}\cdots V_{r-i+1}^{k-l}}+\frac{U^{\frac{k-l}{2}}V_{r-i}^{k-l}}{\mathcal{R}_{r-i-1}}<1.
\end{equation*}
 Thus, one has $q_{r-i}v_{r-i}^{k-l}a_{r-i-1}=q_{r-i-1}a_{r-i}.$ If $a_{r-i}=0$ then $q_{r-i}=1.$ Hence, in all cases, $q_{r-i}|q_{r-i-1}.$ By writing $q'_{r-i-1}=q_{r-i-1}/q_{r-i},$ one has $$v_{r-i}^{k-l}a_{r-i-1}=q'_{r-i-1}a_{r-i}.$$
Since $(q'_{r-i-1},a_{r-i-1})=1,$ one has $q_{r-i-1}'|v_{r-i}^{k-l}.$ Similarly, one has $$q_{r-i-1}'=q_{1,r-i-1}q_{2,r-i-1}^{2}\cdots q_{k-l,r-i-1}^{k-l},$$ where $q_{k-l,r-i-1}$ is maximal and $q_{1,r-i-1},\ldots,q_{k-l-1,r-i-1}$ are squarefree and coprime in pairs. Then, $q_{1,r-i-1}q_{2,r-i-1}\cdots q_{k-l,r-i-1}|v_{r-i}$. Thus, by writing $v_{r-i}=q_{0,r-i-1}\tau$ with $$\tau=q_{1,r-i-1}\cdots q_{k-l,r-i-1},$$ the equation ($\ref{eq3.16}$) may be replaced by 
\begin{equation}\label{28}
    \dsum_{v_1,\ldots,v_{r-i-1}}\dsum_{u_1,u_2}
    \dsum_{q_{0,r-i-1}<V_{r-i}/\tau}\frac{LV_r\cdots V_{r-i+1}(\log L)^i}{Z}+O(V_1\cdots V_r L^{1+\epsilon}U^{3/2}),
\end{equation}
where $$Z^{k-l}=q_{r-i}\biggl(1+L(V_r\cdots V_{r-i+1})^{k-l}\bigl(q_{0,r-i-1}\tau)^{k-l}\bigl|\beta(v_1\cdots v_{r-i-1})^{k-l}(u_1^{k-l}-u_2^{k-l})-\frac{a_{r-i-1}}{q_{r-i-1}}\bigr|\biggr).$$  By applying the upper bound (2) from Appendix A, the inner sum in ($\ref{28}$) is 
\begin{equation*}
\begin{aligned}
& \ll\frac{LV_r\cdots V_{r-i}(\log L)^{i+1}}{q_{r-i}^{1/(k-l)}\tau(1+L(V_r\cdots V_{r-i})^{k-l}|\beta(v_1\cdots v_{r-i-1})^{k-l}(u_1^{k-l}-u_2^{k-l})-a_{r-i-1}/q_{r-i-1}|)^{1/(k-l)}}.
\end{aligned}
\end{equation*}
Since we have $q'_{r-i-1}\leq \tau^{k-l}$ and $q_{r-i-1}=q'_{r-i-1}q_{r-i}$, we find that this expression is
\begin{equation}\label{3.18}
\begin{aligned}
&\leq \frac{LV_r\cdots V_{r-i}(\log L)^{i+1}}{q_{r-i-1}^{1/(k-l)}(1+L(V_r\cdots V_{r-i})^{k-l}|\beta(v_1\cdots v_{r-i-1})^{k-l}(u_1^{k-l}-u_2^{k-l})-a_{r-i-1}/q_{r-i-1}|)^{1/(k-l)}}\\
&= \frac{LV_r\cdots V_{r-i}(\log L)^{i+1}}{(q_{r-i-1}+L(V_r\cdots V_{r-i})^{k-l}|q_{r-i-1}\beta(v_1\cdots v_{r-i-1})^{k-l}(u_1^{k-l}-u_2^{k-l})-a_{r-i-1}|)^{1/(k-l)}}.
\end{aligned}
\end{equation}
Hence, by substituting the bound ($\ref{3.18}$) into the inner sum ($\ref{28}$), the first term in ($\ref{28}$) is seen to be $O(B_{i+1}).$
Thus, this confirms that the bound ($\ref{eq3.15}$) holds for $i+1$. 
Therefore, one infers by induction that
\begin{equation}\label{30}
    \Xi(\beta)\ll \Xi(\beta)+O(V_1\cdots V_r L^{1+\epsilon}U^{3/2}),
\end{equation}
where $$\Xi_1(\beta)=\dsum_{u_1,u_2}\frac{LV_r\cdots V_{1}(\log L)^r}{(q_0+LV_r^{k-l}\cdots V_1^{k-l}|q_0\beta(u_1^{k-l}-u_2^{k-l})-a_0|)^{1/(k-l)}}$$
in which ($q_0,a_0$)=1, $q_0\leq \mathcal{R}_0=L^{1/2}(V_r\cdots V_1)^{(k-l)/2}$ and $|q_0\beta(u_1^{k-l}-u_2^{k-l})-a_0|<1/\mathcal{R}_0.$
By Cauchy's inequality, one has
\begin{equation}\label{31}
    \Xi_1(\beta) \ll UL^{1+\epsilon/2}V_r\cdots V_1 \Xi_{2}(\beta)^{1/2},
\end{equation}
where $$\Xi_2(\beta)=\dsum_{u_1,u_2}\frac{1}{(q_0+LV_r^{k-l}\cdots V_1^{k-l}|q_0\beta(u_1^{k-l}-u_2^{k-l})-a_0|)^{2/(k-l)}}.$$

Plainly, one may also restrict the summation to satisfy
\begin{equation}\label{36''}
    q_0+LV_r^{k-l}\cdots V_1^{k-l}|q_0\beta(u_1^{k-l}-u_2^{k-l})-a_0|<U^{\frac{1}{2}(k-l)}.
\end{equation}
Indeed, the contribution from the summation in ($\ref{31}$) arising from remaining terms is $O(V_1\cdots V_r L^{1+\epsilon}U^{3/2}).$
We put 
$$ j=(u_1,u_2),\ \ \ n=u_2/j,\ \ \ m=(u_1-u_2)/j$$
so that 
$$j\leq U,\ \ m\leq U/j,\ \ U/(2j)<n < n+m\leq U/j,\ \ (n,n+m)=1. $$
Now, $q_0$ and $a_0$ will depend on $n,m,j.$ Let $S=((U/j)^{k-l-1}LV_r^{k-l}\cdots V_1^{k-l})^{1/2}$. Then for fixed $j$ and $m$, by Dirichlet's approximation theorem, there exists $c\in \Z$ and $s\in \N$ such that 
$$(c,s)=1,\ \ \ s\leq S,\ \ \ \textrm{and}\ \ \ |\beta j^{k-l}m-c/s|\leq (sS)^{-1}.$$
Notice again that if $c=0,$ then $s=1$. Let $D=((n+m)^{k-l}-n^{k-l})/m.$ Then 
$$D=\frac{k-l}{m}\dint_{n}^{n+m}x^{k-l-1}dx,$$
and so 
$$ (k-l)(U/(2j))^{k-l-1}\leq D\leq (k-l)(U/j)^{k-l-1}.$$
Thus, from ($\ref{36''}$), one has $q_0\leq U^{\frac{1}{2}(k-l)}$, and
$$\left|\beta j^{k-l}m-\frac{a_0}{q_0D}\right|\leq \frac{U^{\frac{1}{2}(k-l)}}{q_0DLV_r^{k-l}\cdots V_1^{k-l}} $$
Therefore, since  $U\leq \tfrac{1}{2}(LV_1^{k-l}\cdots V_r^{k-l})^{1/(2(k-l)-1)},$ one has
$$|q_0Dc-sa_0|=\left|\frac{c}{s}-\frac{a_0}{q_0D}\right|sq_0D\leq \frac{U^{\frac{1}{2}(k-l)}D}{S}+\frac{U^{\frac{1}{2}(k-l)}S}{LV_r^{k-l}\cdots V_1^{k-l}}<1.$$
Thus, we see that $cq_0D=a_0s.$ Hence $q_0|s.$ Let $s_1=s/q_0.$ Then $cD=a_1s_1.$ Hence $c|a_1$ and $s_1|D.$ Therefore, as ($n,\ n+m$)=1, we have $(n(n+m),s_1)=1.$ Thus, one finds that 
\begin{equation}\label{37}
   \Xi_2(\beta) \ll \dsum_{j\leq U}\dsum_{m\leq U/j}\dsum_{s_1|s}\Xi_3(\beta)
\end{equation}
where
$$\Xi_3(\beta)=\dsum_{n}\frac{(s_1/s)^{2/(k-l)}}{(1+LV_r^{k-l}\cdots V_1^{k-l}(U/(2j))^{k-l-1}|\beta j^{k-l}m-c/s|)^{2/(k-l)}}$$
in which the innermost sum is over $n$ satisfying
\begin{equation}\label{38}
    n\leq U/j,\ \ \ (n(n+m),s_1)=1,\ \ \ s_1|D.
\end{equation}
The third condition $s_1|D$ implies that 
\begin{equation}\label{eq3.26}
    (n+m)^{k-l}\equiv n^{k-l}\ (\textrm{mod}\ s_1m).
\end{equation}
Since we have $(n,n+m)=1$ and $(n,s_1)=1$, one finds that $(n,s_1m)=1$. Thus, there exists $n_0$ such that $nn_0\equiv 1\ (\textrm{mod}\ s_1m).$ Hence, the congruence $(\ref{eq3.26})$ is equivalent to 
$$(1+mn_0)^{k-l}\equiv 1\ (\textrm{mod}\ s_1m).$$
Notice that the congruence $y^{k-l}\equiv 1\ (\textrm{mod}\ s_1m)$ has $v$ solutions modulo $s_1m$, say $g_1,\ldots,g_v$, where $v\ll (s_1m)^{\epsilon}.$ Hence $1+mn_0\equiv g_i\ (\textrm{mod}\ s_1m)$ for some $1\leq i\leq v$. Thus, there are at most $v$ choices for $n_0$, and so for $n,$ modulo $s_1.$
Then, on noting that $s_1m$ is bounded above by powers of $L$ which may depend on $r$, we see that
\begin{equation}\label{39'}
   \Xi_3(\beta) \ll \left(\frac{U}{js_1}+1\right)\frac{L^{\epsilon/4}(s_1/s)^{2/(k-l)}}{(1+LV_r^{k-l}\cdots V_1^{k-l}(U/(2j))^{k-l-1}|\beta j^{k-l}m-c/s|)^{2/(k-l)}},
\end{equation}
where the implicit may depend on $r.$
By examining separately the contribution arising from the terms $U/(js_1)$ and $1$ in the first factor of  $(\ref{39'})$, we see from ($\ref{37}$) that
 $$\Xi_2(\beta) \ll A+O(L^{\epsilon/2}U),$$ where 
$$A=\dsum_{j\leq U}\dsum_{m\leq U/j}\frac{L^{\epsilon/4}Uj^{-1}}{(s+LV_r^{k-l}\cdots V_1^{k-l}(U/(2j))^{k-l-1}|\beta j^{k-l}ms-c|)^{2/(k-l)}}.$$
Plainly, we may restrict the inner sum in $A$ to those $m$ satisfying 
\begin{equation*}
    s+LV_r^{k-l}\cdots V_1^{k-l}(U/(2j))^{k-l-1}|\beta j^{k-l}ms-c|< U^{\frac{1}{2}(k-l)}.
\end{equation*}
Indeed, the contribution on $A$ of the remaining sum is $O(L^{\epsilon}U)$, and thus the right hand side in ($\ref{31}$) is $O(V_1\cdots V_r L^{1+\epsilon}U^{3/2}).$
Let $T=(LV_r^{k-l}\cdots V_1^{k-l}(U/j)^{k-l})^{1/2}$. Then for fixed $j$, by Dirichlet's approximation theorem there exists $d$ and $t$ with 
\begin{equation*}
    (d,t)=1,\ \ \ t\leq T\ \ \ \textrm{and}\ \ \ |\beta j^{k-l}-d/t|\leq (tT)^{-1}.
\end{equation*}
Notice again that if $d=0$, then $t=1$. Then, on recalling $U\leq \tfrac{1}{2}(LV_1^{k-l}\cdots V_r^{k-l})^{1/(2(k-l)-1)}$, for fixed $j$ and $m$ one has
\begin{equation*}
    \left|\frac{c}{ms}-\frac{d}{t}\right|tms\leq \frac{U^{\frac{1}{2}(k-l)}U/j}{T}+\frac{TU^{\frac{1}{2}(k-l)}}{LV_r^{k-l}\cdots V_1^{k-l}(U/(2j))^{k-l-1}}<1.
\end{equation*}
Thus, we see that $ct=dms,$ and so $s|t$. Let $t_1=t/s$. Then $ct_1=dm.$ Thus $t_1|m.$ Let $m_1=m/t_1.$ Therefore, the summation $A$ is bounded above by
\begin{equation*}
   \dsum_{j\leq U}\dsum_{t_1|t}(t_1/t)^{2/(k-l)}A_1,
\end{equation*}
where 
$$A_1=\dsum_{m_1\leq U/(jt_1)}\frac{L^{\epsilon/4}Uj^{-1}}{(1+LV_r^{k-l}\cdots V_1^{k-l}(U/(2j))^{k-l-1}m_1t_1|\beta j^{k-l}-d/t|)^{2/(k-l)}}.$$
Hence, by applying the upper bound (3) from Appendix A, the innermost sum is 
\begin{equation*}
   A_1 \ll \frac{L^{\epsilon/4}U^2j^{-2}t_1^{-1}\log X}{(1+LV_r^{k-l}\cdots V_1^{k-l}(U/j)^{k-l}|\beta j^{k-l}-d/t|)^{2/(k-l)}}
\end{equation*}
Hence, one finds that
\begin{equation}\label{42'}
   \Xi_2(\beta) \ll L^{\epsilon/2}\left(U+\dsum_{j\leq U}\frac{U^2j^{-2}}{(t+LV_r^{k-l}\cdots V_1^{k-l}(U/j)^{k-l}|\beta j^{k-l}t-d|)^{2/(k-l)}}\right),
\end{equation}
and one may restrict the summation over $j$ to satisfy
\begin{equation}\label{42}
    t+LV_r^{k-l}\cdots V_1^{k-l}(U/j)^{k-l}|\beta j^{k-l}t-d|<\frac{1}{2}(U/j)^{(k-l)/2}.
\end{equation}
Let $\overline{\mathcal{R}}=Q^{1/2}$. Then by Dirichlet's approximation theorem, there exists $\overline{q}$ and $\overline{a}$ such that 
\begin{equation*}
    (\overline{q},\overline{a})=1,\ \ \ \overline{q}\leq \overline{\mathcal{R}}\ \ \ \textrm{and}\ \ \ |\overline{q}\beta-\overline{a}|\leq \overline{\mathcal{R}}^{-1}.
\end{equation*}
Since $t$ is non-zero, one finds that $j\leq U.$ Therefore, on recalling $$U\leq \tfrac{1}{2}(LV_1^{k-l}\cdots V_r^{k-l})^{1/(2(k-l)-1)},$$ when $j$ satisfies $(\ref{42})$, one has
\begin{equation*}
    \left|\frac{\overline{a}}{\overline{q}}-\frac{d}{j^{k-l}t}\right|j^{k-l}t\overline{q}< \frac{(U/j)^{(k-l)/2}j^{k-l}}{2\overline{\mathcal{R}}}+\frac{\overline{\mathcal{R}}(U/j)^{(k-l)/2}}{2LV_r^{k-l}\cdots V_1^{k-l}(U/j)^{k-l}}\leq 1.
\end{equation*}
Thus, we see that $j^{k-l}t\overline{a}=\overline{q}d$. Hence $t|\overline{q}.$ Let $\overline{q}'=\overline{q}/t.$ Then $j^{k-l}\overline{a}=\overline{q}'d.$ Hence $\overline{q}'|j^{k-l}$. Let $\overline{q}'=q_{1}'q_{2}'^2\cdots q_{k-l}'^{k-l}$ where $q_{k-l}'$ is maximal and $q_{1}',\ldots,q_{k-l-1}'$ are square-free and coprime in pairs. Then $q_{1}'\cdots q_{k-l}'|j$, so the summation over $j$ in ($\ref{42'}$) is
\begin{equation}\label{44}
 \ll \dsum_{w}\dsum_{t,q_{1}',\ldots,q_{k-l}'}\frac{U^2(wq_{1}'\cdots q_{k-l}')^{-2}t^{-2/(k-l)}}{(1+LV_r^{k-l}\cdots V_1^{k-l}U^{k-l}|\beta-\overline{a}/\overline{q}|)^{2/(k-l)}},
\end{equation}
where the inner sum is over $t,q_{1}',\ldots,q_{k-l}'$ satisfying $\overline{q}=tq_{1}'q_{2}'^2\cdots q_{k-l}'^{k-l}$. Thus, on recalling $Q=(V_1\cdots V_r U)^{k-l}L$, since we have $\overline{q}^{2/(k-l)}\leq  t^{2/(k-l)}q_1'q_2'^2\cdots q_{k-l}'^{k-l}$, the bound ($\ref{44}$) is
\begin{equation*}
    \ll \frac{U^2L^{\epsilon/2}}{(\overline{q}+Q|\beta \overline{q}-\overline{a}|)^{2/(k-l)}}.
\end{equation*}

When $\overline{q}+Q|\overline{q}\beta-\overline{a}|\geq \frac{1}{2}U^{(k-l)/2}$ we are done, so we may suppose that $\overline{q}+Q|\overline{q}\beta-\overline{a}|\leq \frac{1}{2}U^{(k-l)/2}.$ Thus, 
$$\left|\frac{\overline{a}}{\overline{q}}-\frac{a}{q}\right|q\overline{q}<\frac{U^{(k-l)/2}N}{2Q}+\frac{U^{(k-l)/2}}{2N}\leq 1,$$
since by assumption, $U^{\frac{1}{2}(k-l)}\leq N \leq QU^{-\frac{1}{2}(k-l)}.$ Hence, one has $q=\overline{q}, a=\overline{a}.$ Therefore, we complete the proof.
\end{proof}

We now turn to prove Lemma 3.2. To prove this lemma, we exploit Proposition 3.3 and Proposition 3.4.
\begin{proof}[Proof of Lemma 3.2]
 Note that 
\begin{equation}\label{eq3.311}
S(\boldsymbol{\alpha})\leq \dsum_{1\leq h\leq H}\dsum_{y\in \mathcal{A}(Y,R)}\biggl|\dsum_{1\leq x\leq X}e(hg(x,y))\biggr|.
\end{equation}
We shall first apply Proposition 3.3 to the right hand side in ($\ref{eq3.311}$). Let $M_1,\ldots,M_r$ be real numbers with $M_1,\ldots, M_r\geq 1$ satisfying $M_1\cdots M_r=Y/Z$. We will define each of $M_1,\ldots,M_r$ later for applications of Proposition 3.4. For now, we temporarily assume that such $M_1,\ldots,M_r$ exist. Then, by applying Proposition 3.3 with $R=Y^{\eta}$ and 
$$f(y)=\dsum_{1\leq x\leq X}e(hg(x,y)),$$
 one finds from ($\ref{eq3.311}$) that
\begin{equation}\label{35'}
  S(\boldsymbol{\alpha})  \ll S_1(\boldsymbol{\alpha})+\dsum_{1\leq h\leq H}\dsum_{j=1}^r M_1\cdots M_jR^{j-1}\sup_{1\leq y\leq M_j}|f(y)|,
\end{equation}
where 
$$S_1(\boldsymbol{\alpha})=\dsum_{1\leq h\leq H}\dsum_{M_1<v_1\leq M_1R}\cdots\dsum_{M_r<v_r\leq M_rR}\\
    \dsum_{u\in\mathcal{A}(Z,R)}\left|\dsum_{1\leq x\leq X}e\left(hg(x,v_1\ldots v_ru)\right)\right|.$$
Since by $Y/(M_1\cdots M_r)=Z$ and $\sup_{1\leq y\leq M_i}|f(y)|\leq X$ for all $h$, it follows trivially that
\begin{equation*}
    \dsum_{1\leq h\leq H}\dsum_{j=1}^r M_1\cdots M_jR^{j-1}\sup_{1\leq y\leq M_i}|f(y)|\ll H(XY)^{1+\epsilon}Z^{-1/2^{l+2}}.
\end{equation*}

 Then, it suffices to bound $S_1(\boldsymbol{\alpha})$. By applying H\"older's inequality, we see that
\begin{equation}\label{eq3.31}
S_1(\boldsymbol{\alpha})\leq (HYR^r)^{1-2^{-l}}S_2(\boldsymbol{\alpha})^{2^{-l}},
\end{equation}
where 
$$S_2(\boldsymbol{\alpha})=\dsum_{1\leq h\leq H}\dsum_{u\in\mathcal{A}(Z,R)}\dsum_{M_1<v_1\leq M_1R}\cdots\dsum_{M_r<v_r\leq M_rR}\biggl|\dsum_{1\leq x\leq X}e\left(hg(x,v_1\cdots v_ru)\right)\biggr|^{2^{l}}.$$
By applying Weyl differencing, we have 
\begin{align}\label{eq3.32}
&S_2(\boldsymbol{\alpha})\ll X^{2^{l}-l-1}\dsum_{1\leq h\leq H}\dsum_{u\in\mathcal{A}(Z,R)}\dsum_{M_1<v_1\leq M_1R}\cdots\dsum_{M_r<v_r\leq M_rR}S_3(\boldsymbol{\alpha}),    
\end{align}
where 
$$S_3(\boldsymbol{\alpha})=\dsum_{|h_1|\leq X}\cdots\dsum_{|h_{l}|\leq X}\dsum_{x\in I_l(\mathbf{h})}e\left(h\Delta_{l}^x(g(x,v_1\cdots v_ru),\mathbf{h})\right)$$
in which $I_l(\mathbf{h})$ is an interval of integers contained in $[-X,X]$. Note here that $$\Delta_{l}^x(g(x,v_1\cdots v_ru),\mathbf{h})=\Delta_l^x(\alpha_k x^k)+\Delta_l^x(\alpha_lx^l(v_1\cdots v_r u)^{k-l})$$ and $\Delta_l^x(\alpha_lx^l(v_1\cdots v_r u)^{k-l})$ does not depend on $x.$
Thus, by splitting $\mathcal{A}(Z,R)$ into dyadic intervals $[Z_0/2^i,Z_0/2^{i-1}]$ with $Z_0\leq Z$, for any $\epsilon>0$, one infers from ($\ref{eq3.32}$) by applying a standard divisor estimate that
\begin{equation*}
    S_2(\boldsymbol{\alpha})\ll X^{2^l-l+\epsilon}\dsum_{M_1<v_1\leq M_1R}\cdots\dsum_{M_r<v_r\leq M_rR}\dsum_{1\leq n\leq l!HX^{l}}\biggl|\dsum_{u}e\left(\alpha_l(v_1\cdots v_ru)^{k-l}n\right)\biggr|
\end{equation*}
where the innermost sum is over $u\in  \mathcal{A}(Z,R)$ and $Z_0/2\leq u\leq Z_0$ for some $Z_0$ with $Z_0\leq Z$. Since we have $M_1\cdots M_r=Y/Z$, by applying Cauchy's inequality, we deduce that
\begin{equation}\label{eq3.34}
    S_2(\boldsymbol{\alpha})\ll X^{2^l-l+\epsilon}(HX^l(Y/Z)R^r)^{1/2}S_4(\boldsymbol{\alpha})^{1/2},
\end{equation}
where $$S_4(\boldsymbol{\alpha})=\dsum_{M_1<v_1<M_1R}\cdots\dsum_{M_r<v_r<M_rR}\dsum_{1\leq n\leq l!HX^{l}}\left|\dsum_{u}e\left(\alpha_l(v_1\cdots v_ru)^{k-l}n\right)\right|^2.$$
By squaring out and change the order of summations, we see that
\begin{equation}\label{eq3.35}
    S_4(\boldsymbol{\alpha})\leq \dsum_{M_1<v_1\leq M_1R}\cdots\dsum_{M_r<v_r\leq M_rR}\ \dsum_{Z_0/2\leq u_1,u_2\leq Z_0}\left|\dsum_{1\leq n\leq l!HX^{l}}e\left(\alpha_l n(v_1\cdots v_r)^{k-l}(u_1^{k-l}-u_2^{k-l})\right)\right|.
\end{equation}


We define $M_1,\ldots, M_r$  here so that we apply Proposition 3.4. We set $r=1/\eta$, and define 
$$a=\biggl\lfloor\frac{1}{\eta}\biggl(1-\frac{\log Z}{\log Y}\biggr)\biggr\rfloor,$$
which satisfies $Y^{a\eta}\leq Y/Z.$ Then, define $M_r,\ldots, M_1$ by
$$M_i=\left\{\begin{array}{l}
    Y^{\eta}\ \ \ \ \ \ \ \  \ \ \textrm{when} \ \ i> r-a\\
    Y/(ZY^{a\eta})\ \textrm{when} \ \ i=r-a\\
    1\ \ \ \ \ \ \ \  \  \ \  \ \ \textrm{when}\ \ 1\leq i<r-a.
    \end{array}
    \right.$$
    Since $Y/Z=M_1\cdots M_r$, this choice for $M_1,\ldots,M_r$ is in accordance with the hypotheses of Proposition 3.3.

 On recalling that $Z_0\leq Z\leq (HX^l)^{1/(2(k-l))}$ and by observing $Y^{\eta}\geq M_r\geq M_{r-1}\geq\cdots\geq M_1,$ one finds that
\begin{align}
    &1\leq M_rR<\frac{1}{2}(HX^l)^{1/(k-l)}Z_0^{-\frac{1}{2(k-l)}},\\ &1\leq M_iR<\frac{1}{2}(HX^l)^{1/(k-l)}(M_{i+1}R)\cdots (M_rR)Z_0^{-1}\ (i=1,\ldots,r-1).
\end{align}
Furthermore, on recalling that $Z\leq (HX^l)^{1/(2(k-l))}$, one deduces that
\begin{equation}
    Z_0\leq \tfrac{1}{2}(HX^l(M_1R)^{k-l}\cdots (M_rR)^{k-l})^{1/(2(k-l)-1)}.
\end{equation}
Thus, on setting $U=Z_0$ and $V_i=M_iR\ (1\leq i\leq r),$ we see that the hypothese of Proposition 3.4 are satisfied.
Then, since we have $Y/Z=M_1\cdots M_r$, by applying Proposition 3.4 to ($\ref{eq3.35}$) with $L=l!HX^l$ and $V_i=M_iR\ (i=1,\ldots, r)$, we obtain that
\begin{equation}\label{eq3.39}
    S_4(\boldsymbol{\alpha})\ll HX^l(Y/Z)R^r Z_0^2X^{\epsilon}\biggl(\frac{1}{(q+(Z_0/Z)^{k-l}HX^lY^{k-l}|q\alpha_l-a|)^{1/(k-l)}}+Z_0^{-1/2}\biggr).
\end{equation}
Therefore, on substituting ($\ref{eq3.39}$) into ($\ref{eq3.34}$) and that into ($\ref{eq3.31}$), we deduce that 
$$S_1(\boldsymbol{\alpha})\ll  X^{\epsilon}HXYR^r(Z_0Z^{-1})^{2^{-l}}\left(\frac{1}{(q+(Z_0/Z)^{k-l}HX^lY^{k-l}|q\beta-a|)^{1/(k-l)}}+Z_0^{-1/2}\right)^{2^{-l-1}}.$$
Since we have $Z_0\leq Z,$ by choosing $\eta$ small in terms of $\epsilon,$ one concludes
$$S_1(\boldsymbol{\alpha})\ll \frac{H(XY)^{1+\epsilon}}{(q+HX^lY^{k-l}|q\beta-a|)^{1/(2^{l+1}(k-l))}}+H(XY)^{1+\epsilon}Z^{-2^{-l-2}}.$$
 Therefore, we are done.

\end{proof}

\bigskip

\section{Proof of Theorem 1.1}

Our goal in this section is to prove Theorem 1.1. As we mentioned in section 2, we reduce the problem of bounding the fractional parts of polynomial in Theorem 1.1 to the corresponding diagonal problems. Thus, we begin this section by examining the corresponding diagonal problem. 

\begin{prop}
Let $\alpha$, $\beta \in \R$ and  $k\in \N$ with $k\geq 2$. Then, for any $\epsilon>0$,  there exists a real number $X(k,\epsilon)$ such that whenever $X\geq X(k,\epsilon)$ and $Y\geq X(k,\epsilon)$ one has
\begin{equation*}
    \min_{\substack{0\leq x\leq X\\0\leq y\leq Y\\(x,y)\neq(0,0)}}\|\alpha x^k+\beta y^k\|\leq X^{-\sigma+\epsilon}Y^{-\sigma+\epsilon},
\end{equation*}
where $\sigma=2^{1-k}.$
\end{prop}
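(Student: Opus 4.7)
The plan is to adapt the classical Weyl--Danicic argument, exploiting the factorization $\sum_{0\le x\le X,\,0\le y\le Y}e(h(\alpha x^k+\beta y^k))=T_\alpha(h;X)T_\beta(h;Y)$ of the diagonal two-variable exponential sum, where $T_\gamma(h;Z)=\dsum_{0\le z\le Z}e(h\gamma z^k)$, and to supplement the exponential-sum estimate with a direct construction when either $\alpha$ or $\beta$ admits a rational approximation with small denominator.

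Fix $\sigma=2^{1-k}$ and put $H=X^{\sigma-\epsilon}Y^{\sigma-\epsilon}$. Suppose for a contradiction that $\|\alpha x^k+\beta y^k\|>H^{-1}$ for every $(x,y)\ne(0,0)$ in the box. Lemma~2.1 then yields
$$
\dsum_{h\le H}|T_\alpha(h;X)T_\beta(h;Y)|\gg XY.
$$
Apply Dirichlet's theorem to produce approximations $|\alpha-a_1/q_1|\le(q_1HX^{k-1})^{-1}$ and $|\beta-a_2/q_2|\le(q_2HY^{k-1})^{-1}$ with $(a_j,q_j)=1$, $q_1\le HX^{k-1}$, and $q_2\le HY^{k-1}$.

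If $q_1\le X$, I would take $(x,y)=(q_1,0)$: writing $\alpha=a_1/q_1+\theta_1$, a short computation gives $\|\alpha q_1^k\|\le|\theta_1|q_1^k\le q_1^{k-1}/(HX^{k-1})\le H^{-1}$, contradicting the hypothesis directly. The symmetric construction $(0,q_2)$ disposes of the case $q_2\le Y$. In the remaining regime $q_1>X$ and $q_2>Y$, I would invoke H\"older's inequality with exponent $2^{k-1}$ to obtain
$$
\Big(\dsum_{h\le H}|T_\alpha T_\beta|\Big)^{2^{k-1}}\le H^{2^{k-1}-2}\Big(\dsum_h|T_\alpha|^{2^{k-1}}\Big)\Big(\dsum_h|T_\beta|^{2^{k-1}}\Big),
$$
and estimate each inner sum by Weyl differencing $k-1$ times. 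After the change of variables $N=k!hh_1\cdots h_{k-1}$ and the standard divisor bound, the constraint $q_1>X$ collapses the $q_1$-dependence in the Dirichlet estimate for $\dsum_{|N|\le k!HX^{k-1}}\min(X,\|\alpha N\|^{-1})$, yielding $\dsum_h|T_\alpha|^{2^{k-1}}\ll HX^{2^{k-1}-1+\epsilon}$, with the analogous bound for $\beta$ in $Y$. Taking $2^{k-1}$-th roots and simplifying then produces $\dsum_h|T_\alpha T_\beta|\ll H(XY)^{1-\sigma+O(\epsilon)}\ll(XY)^{1-\epsilon(1-\sigma)}$, contradicting the lower bound above.

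I expect the main technical obstacle to lie in verifying that the Weyl mean-value bound $\dsum_h|T_\alpha|^{2^{k-1}}\ll HX^{2^{k-1}-1+\epsilon}$ genuinely holds uniformly in $q_1$ throughout the range $X<q_1\le HX^{k-1}$; this requires a careful splitting of the standard estimate for $\dsum_{|N|\le M}\min(X,\|\alpha N\|^{-1})$ into the regimes where the $MX/q_1$, $M$, $X$, and $q_1$ contributions dominate. Once this is in place, the remainder is routine exponent arithmetic together with a matching of the $\epsilon$ parameters between the assumption, the Weyl estimate, and the final bound.
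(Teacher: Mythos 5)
Your proof proposal is correct and follows essentially the same route as the paper's own proof of Proposition 4.1: assume no small fractional part exists, apply Lemma~2.1 to obtain the lower bound $\gg XY$, use Dirichlet's theorem to produce $q_1,q_2$, dispatch the cases $q_1\le X$ and $q_2\le Y$ by the direct construction $(q_1,0)$ or $(0,q_2)$, and in the remaining range $q_1>X$, $q_2>Y$ bound the exponential sum by a H\"older step followed by Weyl differencing and the standard estimate for $\sum_{n\le M}\min(X,\|\alpha n\|^{-1})$, reaching a contradiction since the Dirichlet bracket collapses to $O(1/X)$ (resp.\ $O(1/Y)$). The only cosmetic difference is that you apply H\"older with exponent $2^{k-1}$ directly to the product $\sum_h|T_\alpha T_\beta|$, whereas the paper first applies Cauchy's inequality and then invokes a mean-square Weyl bound (which in turn uses H\"older once more); the two arrangements are arithmetically equivalent, and your exponent computation $(XY)^{1-\epsilon(1-\sigma)}=o(XY)$ is accurate.
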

\begin{proof}
Let $H=X^{\sigma-\epsilon}Y^{\sigma-\epsilon}.$ Suppose that there exists no $x$ and $y$ satisfying $$\|\alpha x^k+\beta y^k\|\leq 1/H.$$
By Dirichlet approximation theorem, there exits $q_1,q_2\in \N$ and $a_1,a_2\in \Z$ such that $(q_1,a_1)=(q_2,a_2)=1$ and 
\begin{equation*}
    \left|\alpha-\frac{a_1}{q_1}\right|<\frac{1}{q_1X^{k-1}H},\ \left|\beta-\frac{a_2}{q_2}\right|<\frac{1}{q_2Y^{k-1}H}.
\end{equation*}
If $q_1\leq X$, then by observing that $|q_1\alpha- a_1|<(HX^{k-1})^{-1},$ one has
\begin{equation*}
    \min_{\substack{0\leq x\leq X\\0\leq y\leq Y\\ (x,y)\neq (0,0)}}\|\alpha x^k+\beta y^k\|\leq \|\alpha q_1^k\|\leq q_1^{k-1}\|\alpha q_1\|< X^{k-1}\frac{1}{X^{k-1}H}\leq \frac{1}{H}.
\end{equation*}
Similarly, if $q_2\leq Y,$ one has
\begin{equation*}
    \min_{\substack{1\leq x\leq X\\1\leq y\leq Y\\ (x,y)\neq (0,0)}}\|\alpha x^k+\beta y^k\|\leq\frac{1}{H}.
\end{equation*}
These are contradiction to our assumption. Thus, we may assume that $q_1\geq X$ and $q_2\geq Y$.

 It follows from our assumption by Lemma 2.1 that
\begin{equation}\label{35}
    XY\ll\dsum_{1\leq h\leq H}\bigl|\dsum_{\substack{1\leq x\leq X\\1\leq y\leq Y}}e(h(\alpha x^k+\beta y^k))\bigr|.
\end{equation}
On writing
\begin{equation*}
    \dsum_{\substack{1\leq x\leq X\\1\leq y\leq Y}}e(h(\alpha x^k+\beta y^k))=\dsum_{1\leq x\leq X}e(h\alpha x^k)\dsum_{1\leq y\leq Y}e(h\beta y^k),
\end{equation*}
by Cauchy's inequality, the right hand side in ($\ref{35}$) is
$$\ll \left(\dsum_{1\leq h\leq H}\biggl|\dsum_{1\leq x\leq X}e(h\alpha x^k)\biggr|^2\right)^{1/2}\left(\dsum_{1\leq h\leq H}\biggl|\dsum_{1\leq y\leq Y}e(h\beta y^k)\biggr|^2\right)^{1/2}$$
Then, by applying Weyl differencing, the last expression is for every $\delta>0$
\begin{equation}\label{36}
    \ll X^{1+\delta}Y^{1+\delta}H\left(\frac{1}{q_1}+\frac{1}{X}+\frac{q_1}{X^kH}\right)^{2^{1-k}}\left(\frac{1}{q_2}+\frac{1}{Y}+\frac{q_2}{Y^kH}\right)^{2^{1-k}}.
\end{equation}
Therefore, on recalling that $q_1\geq X$ and $q_2\geq Y,$ one finds that ($\ref{36}$) is
\begin{equation*}
    \ll X^{1+\delta}Y^{1+\delta}X^{-\epsilon}Y^{-\epsilon}.
\end{equation*}
By taking $\delta<\epsilon$, it contradicts $(\ref{35})$. Therefore, we are forced to have 
\begin{equation*}
    \min_{\substack{0\leq x\leq X\\0\leq y\leq Y\\(x,y)\neq(0,0)}}\|\alpha x^k+\beta y^k\|\leq X^{-\sigma+\epsilon}Y^{-\sigma+\epsilon}.
\end{equation*}
\end{proof}

\bigskip

As sketched in section 2, we shall reduce Theorem 1.1 to this diagonal problem through Lemma 3.1 and inductive arguments.

\begin{proof}[Proof of Theorem 1.1]
Let $H=X^{\sigma-\epsilon}$ with $\sigma=\frac{l+2}{l+1}2^{1-k}.$ There exists $q_k$ and $a_k$ with $(q_k,a_k)=1$ such that $q_k\leq X^{k-1}H$ and 
\begin{equation}\label{eq4.4}
    |\alpha_k-a_k/q_k|\leq \frac{1}{q_kX^{k-1}H}.
\end{equation}
Suppose that 
\begin{equation}\label{4.6}
    \frac{1}{H}< \|\alpha_kx^k+\alpha_lx^ly^{k-l}+\alpha_{l-1}x^{l-1}y^{k-l+1}+\cdots +\alpha_0 y^k\| 
\end{equation}
for all $0\leq x,y\leq X$ but $(x,y)= (0,0).$
Then, we may assume $q_k> X$. Indeed, if one were to have $q_k\leq X$, by ($\ref{eq4.4}$), one has
\begin{align*}
    &\min_{\substack{1\leq x,y\leq X\\ (x,y)\neq (0,0)}}\|\alpha_kx^k+\alpha_lx^ly^{k-l}+\alpha_{l-1}x^{l-1}y^{k-l+1}+\cdots+\alpha_0y^k\|\\
    &\leq \min_{1\leq x\leq X}\|\alpha_k x^k\|\leq q_k^{k-1}\|\alpha_kq_k\|\leq \frac{1}{H},
\end{align*}
which contradicts our assumption. 

From our assumption ($\ref{4.6}$), we derive analogues of ($\ref{4.6}$) by an inductive arguments. Specifically, at the $i$-th step, with $0\leq i\leq l$, we shall prove that there exist coefficients $\alpha_{l-i}^{(i)},\ldots,\alpha_{0}^{(i)}$, suitable integral multiples of $\alpha_{l-i},\ldots,\alpha_{0}$, respectively, such that
\begin{equation}\label{4.77}
    \frac{1}{H}\ll \|\alpha_k x^k+\alpha_{l-i}^{(i)}x^{l-i}y^{k-l+i}+\cdots+ \alpha_0^{(i)} y^k\|
\end{equation}
for all $0\leq x\leq X$ and $0\leq y\leq Y_i=X^{1-i/(l+1)}$ but $(x,y)=(0,0).$ 
 The case $i=0$ obviously follows from $(\ref{4.6}).$ Assume that $(\ref{4.77})$ holds for a particular $i\leq l-1.$ We shall show that ($\ref{4.77}$) holds for $i$ replaced by $i+1.$

By Lemma 2.1, the inequality ($\ref{4.77}$) implies that
\begin{equation}\label{4.7}
    XY_i\ll \dsum_{1\leq h\leq H}\biggl|\dsum_{1\leq x\leq X}\dsum_{1\leq y\leq Y_i}e(h(\alpha_k x^k+\alpha_{l-i}^{(i)} x^{l-i}y^{k-l+i}+\cdots+\alpha_0^{(i)} y^k))\biggr|.
\end{equation}
Note that for any $\delta>0$, it follows from Dirichlet's approximation theorem that there exists $q\in\N$ such that
\begin{equation}\label{4.5}
\begin{aligned}
   &q\leq X^{l-i+1}Y_i^{k-l+i}H^{-2^{k-1}+1}X^{-\delta}\\
   &\|q\alpha_{l-i}^{(i)}\|\leq \frac{X^{\delta}}{X^{l-i+1}Y_i^{k-l+i}H^{-2^{k-1}+1}}.
   \end{aligned}
\end{equation}
Thus, on recalling $X<q_k<X^{k-1}H$, it follows by Lemma 3.1 that the right hand side in ($\ref{4.7}$) is
\begin{align*}
    \ll &HX^{1+\epsilon}Y_i\biggl(\frac{1}{q_k}+\frac{1}{X}+\frac{q_k}{X^kH}\biggr)^{2^{1-k}}\biggl(\frac{1}{q}+\frac{1}{Y_i}+\frac{q}{X^{l-i}Y_i^{k-l+i}H}\biggr)^{2^{1-k}}\\
    \ll & HX^{1+\epsilon}Y_iX^{-2^{1-k}}(q^{-2^{1-k}}+Y_i^{-2^{1-k}}+X^{2^{1-k}}H^{-1-\delta}).
\end{align*}
Note that $(XY_i)^{2^{1-k}}\geq H.$
Thus, combining this and the lower bound of ($\ref{4.7}$), one obtains
$$XY_i\ll HX^{1+\epsilon}Y_iX^{-2^{1-k}}q^{-2^{1-k}}.$$
This implies $$q\leq X^{1/(l+1)}.$$

We apply the triangle inequality and put $y=qy_1$ with $0\leq y_1\leq X^{1-(i+1)/(l+1)}$. Thus, we have $qy_1\leq Y_i$ and 
\begin{equation}\label{4.8}
\begin{aligned}
&\|\alpha_k x^k+\alpha_{l-i}^{(i)}x^{l-i}(qy_1)^{k-l+i}+\cdots+ \alpha_0^{(i)} (qy_1)^k\|\\
& \leq \|\alpha_kx^k+\alpha_{l-i-1}^{(i)}x^{l-i-1}(qy_1)^{k-l+i+1}+\cdots+\alpha_0^{(i)}(qy_1)^k\|+\|\alpha_{l-i}^{(i)} x^{l-i}(qy_1)^{k-l+i}\|.
  \end{aligned}
\end{equation}
Since we have $H=X^{\sigma-\epsilon}$ with $\sigma=\frac{l+2}{l+1}2^{1-k},$ by applying the triangle inequality and ($\ref{4.5}$), one has
\begin{equation}\label{4.9}
    \begin{aligned}
        \|\alpha_{l-i}^{(i)} x^{l-i}(qy_1)^{k-l+i}\|&\leq x^{l-i}(qy_1)^{k-l+i-1}y_1\|\alpha_{l-i}^{(i)}q\|\\
        &\leq X^{l-i}Y_i^{k-l+i-1}X^{1-(i+1)/(l+1)}\frac{X^{\delta}}{X^{l-i+1}Y_i^{k-l+i}H^{-2^{k-1}+1}}\\
        &\leq \frac{X^{\epsilon}}{HX}H^{2^{k-1}}X^{-1/(l+1)}\\
        &\leq \frac{X^{-\epsilon}}{H}.
    \end{aligned}
\end{equation}
Hence, by substituting the bound ($\ref{4.9}$) into ($\ref{4.8}$) and the lower bound ($\ref{4.77}$), one obtains
\begin{equation}\label{4.10}
    \frac{1}{H}\ll \|\alpha_kx^k+\alpha_{l-i-1}^{(i)}x^{l-i-1}(qy_1)^{k-l+i+1}+\cdots+\alpha_0^{(i)}(qy_1)^k\|
\end{equation}
for all $0\leq x\leq X$ and $0\leq y_1\leq Y_{i+1}$ but $(x,y_1)=(0,0).$ By writing $$\alpha_{l-i-r}^{(i+1)}=\alpha_{l-i-r}^{(i)}q^{k-l+i+r}\ (1\leq r\leq l-i),$$
one concludes from ($\ref{4.10}$) that 
\begin{equation}\label{4.111}
     \frac{1}{H}\ll \|\alpha_kx^k+\alpha_{l-i-1}^{(i+1)}x^{l-i-1}y_1^{k-l+i+1}+\cdots+\alpha_0^{(i+1)}y_1^k\|
\end{equation}
for all $0\leq x\leq X$ and $0\leq y_1\leq Y_{i+1}$ but $(x,y)=(0,0).$

Thus, one infers from induction that
$$\frac{1}{H}\ll \|\alpha_k x^k+\alpha_0^{(l)}y^k\|$$
for all $0\leq x\leq X$ and $0\leq y\leq X^{1/(l+1)}$ but $(x,y)=(0,0),$
which contradicts the conclusion of Proposition 4.1. Thus, we are forced to conclude that 
$$\min_{\substack{0\leq x,y\leq X\\ (x,y)\neq (0,0)}}\|\alpha_kx^k+\alpha_lx^ly^{k-l}+\alpha_{l-1}x^{l-1}y^{k-l+1}+\cdots+\alpha_0y^k\|\leq 1/H.$$\end{proof}

\begin{proof}[Proof of Corollary 1.2]
By change of variables,
\begin{equation}\label{eq4.13}
\begin{aligned}
   & x= x_1-c_{k-1}y_1   \\
   & y= kc_{k}y_1,
\end{aligned}                  
\end{equation}
one has
\begin{align}\label{eq4.14}
&\varphi(x,y)=\dsum_{0\leq i\leq k} c_ix^{i}y^{k-i}=\dsum_{0\leq i\leq k}c_i(x_1-c_{k-1}y_1)^i(kc_{k}y_1)^{k-i}=\dsum_{\substack{0\leq i\leq k\\ i\neq k-1}} c_i' x_1^{i}y_1^{k-i},
\end{align}
where each $c_i'$ is obtained by the Binomial theorem.
Furthermore, notice from ($\ref{eq4.13}$) that 
$$0\leq x_1\leq X/2\ \textrm{and}\ 0\leq y_1\leq Y=\frac{X}{k(1+|c_k|)(1+|c_{k-1}|)}$$
implies that
$$ |x|\leq X\ \textrm{and}\ |y|\leq X.$$
Hence, 
 one has
\begin{equation*}
\begin{aligned}
    & \min_{\substack{0\leq |x|,|y|\leq X\\(x,y)\neq (0,0)}}\|\alpha_k \varphi(x,y)+\alpha_{k-2} x^{k-2}y^2+\cdots+\alpha_0 y^k\| \\
     \leq &\ \min_{\substack{0\leq x_1\leq X/2\\ 0\leq y_1\leq Y\\(x_1,y_1)\neq (0,0)}}\|\alpha_k x_1^k+\alpha_{k-2}' x_1^{k-2}y_1^2+\cdots+\alpha_0' y_1^k\|,
\end{aligned}
\end{equation*}
where each $\alpha_i'$ is obtained by ($\ref{eq4.14}$) and the Binomial theorem. Then, by applying Theorem 1.1 to the last expression, we conclude that for any $\epsilon>0$, there exists $X(k,\epsilon)$ such that whenever $X>X(k,\epsilon)$ one has
\begin{equation*}
    \min_{\substack{0\leq |x|,|y|\leq X\\(x,y)\neq (0,0)}}\|\alpha_k \varphi(x,y)+\alpha_{k-2} x^{k-2}y^2+\cdots+\alpha_0 y^k\| \leq X^{-\sigma+\epsilon},
\end{equation*}
where $\sigma=k/(k-1)2^{1-k}.$

\end{proof}

\section{Proof of Theorem 1.3}

Our goal in this section is to prove Theorem 1.3. We begin this section by examining the corresponding diagonal problem. The following proposition is useful for larger $k.$

\begin{prop}
Let $\alpha$, $\beta \in \R$ and  $k\in \N$ with $k\geq $. Then, for any $\epsilon>0$,  there exists a real number $X(k,\epsilon)$ such that whenever $X\geq X(k,\epsilon)$ and $Y\geq X(k,\epsilon)$ one has
\begin{equation}\label{50}
    \min_{\substack{0\leq x\leq X\\0\leq y\leq Y\\(x,y)\neq(0,0)}}\|\alpha x^k+\beta y^k\|\leq X^{-\sigma+\epsilon}Y^{-\sigma+\epsilon},
\end{equation}
where $\sigma=1/(k(k-1)).$
\end{prop}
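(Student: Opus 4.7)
The argument will follow the template of Proposition 4.1 verbatim, the single substantive modification being that the classical Weyl bound with exponent $2^{1-k}$ invoked there is replaced by the sharper Weyl-type estimate with exponent $\sigma=1/(k(k-1))$ derived by Baker [\ref{ref3}] from modern progress on Vinogradov's mean value theorem. This is precisely the transition recorded in the introduction between the Heilbronn--Danicic era and Baker's work for the one-variable problem.

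Set $H=X^{\sigma-\epsilon}Y^{\sigma-\epsilon}$ with $\sigma=1/(k(k-1))$ and argue by contradiction, supposing $\|\alpha x^k+\beta y^k\|>1/H$ for every $(x,y)\in[0,X]\times[0,Y]$ with $(x,y)\neq(0,0)$. Dirichlet's theorem furnishes coprime approximations $(q_1,a_1)$ and $(q_2,a_2)$ with $q_1\leq X^{k-1}H$, $q_2\leq Y^{k-1}H$, and $|q_1\alpha-a_1|\leq(X^{k-1}H)^{-1}$, $|q_2\beta-a_2|\leq(Y^{k-1}H)^{-1}$. Taking $x=q_1$, $y=0$ one has $\|\alpha q_1^k\|\leq q_1^{k-1}/(X^{k-1}H)$, which rules out $q_1\leq X$, and similarly $q_2\leq Y$. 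Applying Lemma 2.1 then yields
$$XY\ll\dsum_{1\leq h\leq H}|A_h||B_h|,\qquad A_h=\dsum_{1\leq x\leq X}e(h\alpha x^k),\quad B_h=\dsum_{1\leq y\leq Y}e(h\beta y^k),$$
while Cauchy's inequality decouples the variables as
$$\dsum_h|A_h||B_h|\leq\Bigl(\dsum_h|A_h|^2\Bigr)^{1/2}\Bigl(\dsum_h|B_h|^2\Bigr)^{1/2}.$$
Invoking Baker [\ref{ref3}, Theorem 3] in $L^2$-averaged form now gives, for any $\delta>0$,
$$\dsum_{h\leq H}|A_h|^2\ll HX^{2+\delta}\bigl(q_1^{-1}+X^{-1}+q_1(HX^k)^{-1}\bigr)^{2/(k(k-1))}$$
together with the analogous bound for $B_h$ with $(q_2,Y)$ in place of $(q_1,X)$. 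Because $q_1>X$ and $q_2>Y$, the parenthesised factors reduce to $q_1^{-1}$ and $q_2^{-1}$ respectively, whence $\dsum_h|A_h||B_h|\ll HX^{1+\delta-\sigma}Y^{1+\delta-\sigma}$. Combining this with the lower bound $XY$ and the choice of $H$, taking $\delta<\epsilon$ yields $XY\ll X^{1+\delta-\epsilon}Y^{1+\delta-\epsilon}$, a contradiction for $X,Y$ sufficiently large, which forces $(\ref{50})$.

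\textbf{Anticipated obstacle.} The only non-routine ingredient is the second-moment estimate for $|A_h|$ used above. The pointwise bound $|A_h|\ll X^{1+\epsilon}(q_h^{-1}+X^{-1}+q_h(HX^k)^{-1})^{1/(k(k-1))}$, in which $q_h$ denotes the denominator of a suitable rational approximation to $h\alpha$, is the direct consequence of the Bourgain--Demeter--Guth and Wooley resolution of Vinogradov's mean value theorem, as packaged in Baker [\ref{ref3}, Theorem 3]. Promoting this pointwise bound to an $L^2$-average in $h$ requires observing that, across each dyadic range of $h\leq H$, the denominator $q_h$ is typically comparable to $q_1/\gcd(h,q_1)$, so that the saving $q_1^{-1/(k(k-1))}$ survives summation over $h$ up to an implicit $H^{\delta}$ factor. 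This averaging step is the one place where care is required; once granted, the remainder of the proof is a cosmetic variation on that of Proposition 4.1.
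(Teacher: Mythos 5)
Your high-level strategy is the right one: mirror Proposition 4.1, swap the classical Weyl exponent $2^{1-k}$ for the Vinogradov-strength exponent $\sigma=1/(k(k-1))$, and extract the contradiction. The paper's Proposition 5.1 does exactly this. But the step you flag yourself as needing ``care'' — promoting the pointwise bound
$$\Bigl|\dsum_{1\leq x\leq X}e(h\alpha x^k)\Bigr|\ll X^{1+\epsilon}\Bigl(\frac{1}{r_1}+\frac{1}{X}+\frac{r_1}{X^k}\Bigr)^{\sigma},$$
in which the denominator $r_1=r_1(h)$ comes from a Dirichlet approximation to $h\alpha$ and thus varies with $h$, to an $L^2$-average in $h$ expressed in terms of the fixed denominator $q_1$ for $\alpha$ — is the whole content of the proof, and the heuristic you offer (``$q_h$ is typically comparable to $q_1/\gcd(h,q_1)$'') does not constitute an argument. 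You have asserted the key estimate rather than derived it, so as written the proposal has a genuine gap.

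The paper handles the averaging by an entirely explicit mechanism you do not invoke. After applying the Weyl bound from Vaughan's book, it passes through the transference principle to rewrite the saving as $(r_1+X^k\|r_1h\alpha\|)^{-\sigma}$, so that the quantity being averaged is a clean reciprocal rather than one whose denominator floats with $h$. It then does \emph{not} decouple by Cauchy--Schwarz, as you do, but rather keeps the product $\sum_h|A_h||B_h|$, expands each pointwise bound as a two-term sum (the $X^{1-\sigma}$ ``generic'' term plus the $q$-sensitive term), and estimates the four resulting pieces $U_1,\dots,U_4$ separately using H\"older's inequality, a divisor estimate to convert the double sum over $(r_1,h)$ into a single sum over $n=r_1h$, and then Vaughan's standard lemma on $\sum_n\min\{M/n,\|n\alpha\|^{-1}\}$ to bring $q_1$ and $q_2$ into play. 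That chain is precisely what replaces your ``averaging step requires care'' sentence, and it is not cosmetic: without the transference principle and the divisor-estimate reduction, there is no route from $r_1(h)$ to $q_1$. Your Cauchy--Schwarz decoupling would still require exactly this machinery inside each second-moment sum, so the change of decomposition does not buy you anything. To repair the proposal, replace the heuristic paragraph with the transference/divisor/Vaughan-Lemma-2.2 argument, applied to whichever decomposition you prefer.
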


\begin{proof}
Let $H=X^{\sigma-\epsilon}Y^{\sigma-\epsilon}$ with $\sigma=1/(k(k-1)).$ Suppose that we have
\begin{equation}\label{5.22}
\|\alpha x^k+\beta y^k\|>1/H    
\end{equation}
for all $x$ and $y$. By Dirichlet's approximation theorem, there exist $q_1,q_2\in \N$ and $a_1,a_2\in \Z$ with $(q_1,a_1)=(q_2,a_2)=1$ such that $q_1,q_2\leq X^{k-1}H$ and 
\begin{equation*}
    \bigl|\alpha-\frac{a_1}{q_1}\bigr|\leq \frac{1}{q_1X^{k-1}H},\ \bigl|\beta-\frac{a_2}{q_2}\bigr|\leq \frac{1}{q_2X^{k-1}H}.
\end{equation*}
By the same treatment in the proof of Proposition 4.1, we may assume that $q_1$ and $q_2$ are greater than $X.$

By Dirichlet's approximation theorem, there exists $r_1\in \N$ and $b_1\in\Z$ with $(r_1,b_1)=1$ such that $|h\alpha-b_1/r_1|\leq r_1^{-1}X^{1-k}$ and $r_1\leq X^{k-1}.$ Thus, by [$\ref{ref11}$, Theorem 5.2] depending on Vinogradov's man value theorem, one has
$$\dsum_{1\leq x\leq X}e(h\alpha x^k) \ll X^{1+\epsilon/3}\biggl(\frac{1}{r_1}+\frac{1}{X}+\frac{r_1}{X^k}\biggr)^{\sigma},$$
where $\sigma=1/(k(k-1)).$
Then, by the transference principle [$\ref{ref28}$, Lemma 14.1], one obtains
$$\dsum_{1\leq x\leq X}e(h\alpha x^k) \ll X^{1+\epsilon/3}\biggl(\frac{1}{r_1+X^k\|r_1h\alpha\|}+\frac{1}{X}+\frac{r_1+X^k\|r_1h\alpha\|}{X^k}\biggr)^{\sigma}.$$
Since $r_1\leq X^{k-1}$, this bound is seen to be 
\begin{equation}\label{eq5.2}
    \dsum_{1\leq x\leq X}e(h\alpha x^k)\ll X^{1-\sigma+\epsilon/3}+X^{1+\epsilon/3}\bigl(r_1+X^k\|r_1h\alpha\|\bigr)^{-\sigma}.
\end{equation}
Similarly, whenever $|h\beta-b_2/r_2|\leq r_2^{-1}Y^{1-k}$ and $r_2\leq Y^{k-1}$ with $(r_2,b_2)=1,$ one obtains 
\begin{equation}\label{eq5.3}
    \dsum_{1\leq y\leq y}e(h\beta y^k)\ll Y^{1-\sigma+\epsilon/3}+Y^{1+\epsilon/3}\bigl(r_2+Y^k\|r_2h\beta\|\bigr)^{-\sigma}.
\end{equation}
Notice here that $r_1,\ r_2$ may depend on $h.$
Meanwhile, by Lemma 2.1, it follows from our assumption $(\ref{5.22})$ that 
\begin{equation}\label{eq5.4}
XY\ll \dsum_{1\leq h\leq H}\biggl|\dsum_{\substack{1\leq x\leq X\\1\leq y\leq Y}}e(h(\alpha x^k+\beta y^k))\biggr|.    
\end{equation}
Then, by $(\ref{eq5.2})$ and $(\ref{eq5.3})$, we have
\begin{align*}
    \dsum_{1\leq h\leq H}\biggl|\dsum_{\substack{1\leq x\leq X\\1\leq y\leq Y}}e(h(\alpha x^k+\beta y^k))\biggr|&=  \dsum_{1\leq h\leq H}\biggl|\dsum_{\substack{1\leq x\leq X}}e(h(\alpha x^k))\biggr|\biggl|\dsum_{1\leq y\leq Y}e(h\beta y^k)\biggr|\\
    &\ll \dsum_{1\leq h\leq H}(U_1(h)+U_2(h)+U_3(h)+U_4(h)),
\end{align*}
where 
\begin{align*}
&U_1(h)=X^{1-\sigma+\epsilon/3}Y^{1-\sigma+\epsilon/3},\\
&U_2(h)=X^{1-\sigma+\epsilon/3}Y^{1+\epsilon/3}(r_2+Y^k\|r_2h\beta\|)^{-\sigma},\\ &U_3(h)=X^{1+\epsilon/3}Y^{1-\sigma+\epsilon/3}(r_1+X^k\|r_1h\alpha\|)^{-\sigma},\\ &U_4(h)=X^{1+\epsilon/3}Y^{1+\epsilon/3}(r_1+X^k\|r_1h\alpha\|)^{-\sigma}(r_2+Y^k\|r_2h\beta\|)^{-\sigma}
\end{align*}
in which $r_1$ and $r_2$ depend on $h.$

We shall show that 
$$\dsum_{1\leq h\leq H}(U_1(h)+U_2(h)+U_3(h)+U_4(h))\ll (XY)^{1-\epsilon/2}.$$

First, since $H=X^{\sigma-\epsilon}Y^{\sigma-\epsilon},$ we have 
\begin{equation}\label{eq5.55}
\dsum_{1\leq h\leq H}U_1(h)\ll X^{1-\epsilon/2}Y^{1-\epsilon/2}.    
\end{equation}

Second, consider $\dsum_{1\leq h\leq H}U_2(h).$ By H\"older's inequality, one has 
\begin{equation}\label{eq5.5}
\begin{aligned}
\dsum_{1\leq h\leq H}U_2(h)&\leq X^{1-\sigma+\epsilon/3}Y^{1+\epsilon/3}H^{1-\sigma}\left(\dsum_{1\leq h\leq H}(r_2+Y^k\|r_2h\beta\|)^{-1}\right)^{\sigma}\\
&=X^{1-\sigma+\epsilon/3}Y^{1+\epsilon/3}H^{1-\sigma}\left(Y^{-k}\dsum_{1\leq h\leq H}\min\left\{\frac{Y^k}{r_2},\|r_2h\beta\|^{-1}\right\}\right)^{\sigma}.
\end{aligned}
\end{equation}
Note that since $r_2\leq Y^{k-1}$, one has 
\begin{align*}
    \dsum_{1\leq h\leq H}\min\left\{\frac{Y^k}{r_2},\|r_2h\beta\|\right\}&\leq \dsum_{1\leq r\leq Y^{k-1}}\dsum_{1\leq h\leq H} \min\left\{\frac{Y^k}{r},\|rh\beta\|^{-1}\right\}\\
    &\leq \dsum_{1\leq r\leq Y^{k-1}}\dsum_{1\leq h\leq H} \min\left\{\frac{Y^kH}{rh},\|rh\beta\|^{-1}\right\}.
\end{align*}
By a standard divisor estimate, this bound is seen to be 
\begin{align*}
    \ll X^{\epsilon}\dsum_{1\leq n\leq Y^{k-1}H}\min\left\{\frac{Y^kH}{n},\|n\beta\|\right\}.
\end{align*}
By [$\ref{ref11}$, Lemma 2.2], this bound is
\begin{equation}\label{eq5.6}
    \ll X^{\epsilon}Y^kH\left(\frac{1}{q_2}+\frac{1}{Y}+\frac{q_2}{Y^kH}\right)\leq X^{\epsilon}Y^{k-1}H,
\end{equation}
where we have used inequalities $|\beta-a_2/q_2|\leq q_2^{-1}X^{1-k}H^{-1}$ and $q_2>X.$ Hence, on substituting ($\ref{eq5.6}$) into ($\ref{eq5.5}$), we find that 
\begin{equation}\label{eq5.7}
   \dsum_{1\leq h\leq H}U_2(h)\ll X^{1-\sigma+\epsilon/2}Y^{1-\sigma+\epsilon/2}H\ll X^{1-\epsilon/2}Y^{1-\epsilon/2}.
\end{equation}
 
 Third, consider $\dsum_{1\leq h\leq H}U_3(h)$. By the same treatment with just above, we obtain
 \begin{equation}\label{eq5.99}
 \begin{aligned}
\dsum_{1\leq h\leq H}U_3(h)&= X^{1+\epsilon/3}Y^{1-\sigma+\epsilon/3}H^{1-\sigma}\left(X^{-k}\dsum_{1\leq h\leq H}\min\left\{\frac{X^k}{r_1},\|r_1h\alpha\|^{-1}\right\}\right)^{\sigma}\\
&\ll X^{1+\epsilon/3}Y^{1-\sigma+\epsilon/3}H^{1-\sigma}\left(X^{\epsilon}H\left(\frac{1}{q_1}+\frac{1}{X}+\frac{q_1}{X^kH}\right)\right)^{\sigma}\\
&\ll X^{1-\epsilon/2}Y^{1-\epsilon/2}.
\end{aligned}
 \end{equation}

Finally, consider $\dsum_{1\leq h\leq H}U_4(h).$ By H\"older's inequality, we have
\begin{equation}\label{eq5.9}
    \dsum_{1\leq h\leq H}U_4(h)\ll (XY)^{1+\epsilon/3}H^{1-2\sigma}A^{\sigma}B^{\sigma},
\end{equation}
where
$$A=X^{-k}\dsum_{1\leq h\leq H}\min\left\{\frac{X^{k}}{r_1}, \|r_1h\alpha\|^{-1}\right\}$$ and
$$B=Y^{-k}\dsum_{1\leq h\leq H}\min\left\{\frac{Y^k}{r_2},\|r_2h\beta\|^{-1}\right\}.$$
By the same treatment in the case $ \dsum_{1\leq h\leq H}U_2(h)$ and $\dsum_{1\leq h\leq H}U_3(h)$, one infers from ($\ref{eq5.9}$) that 
\begin{equation}\label{eq5.10}
\dsum_{1\leq h\leq H}U_4(h)\ll (XY)^{1+\epsilon/3}H^{1-2\sigma}(Y^{\epsilon}HY^{-1})^{\sigma}(X^{\epsilon}HX^{-1})^{\sigma}\ll(XY)^{1-\epsilon/2}.  
\end{equation}

Hence, by ($\ref{eq5.55}$), ($\ref{eq5.7}$), ($\ref{eq5.99}$) and ($\ref{eq5.10}$), we have 
$$\dsum_{1\leq h\leq H}(U_1(h)+U_2(h)+U_3(h)+U_4(h))\ll (XY)^{1-\epsilon/2}.$$
This contradicts $(\ref{eq5.4})$ stemming from our assumption that 
$$\min_{\substack{0\leq x\leq X\\0\leq y\leq Y\\(x,y)\neq (0,0)}}\|\alpha x^k +\beta y^k\|> 1/H.$$
Therefore, we are forced to conclude that
$$\min_{\substack{0\leq x\leq X\\0\leq y\leq Y\\(x,y)\neq (0,0)}}\|\alpha x^k +\beta y^k\|\leq 1/H.$$
\end{proof}

We shall reduce the problem in Theorem 1.3 to this diagonal problem by exploiting the same argument in Theorem 1.1 with Lemma 3.1 replaced by Lemma 3.2.
\begin{proof}[Proof of Theorem 1.3]

Let $H=X^{\sigma-\epsilon}$ with $\sigma=\frac{2}{k(k-1)+\rho(k,l)}.$ 
Suppose that 
\begin{equation}\label{5.8}
    1/H < \|\alpha_kx^k+\alpha_lx^ly^{k-l}+\cdots+\alpha_0 y^k\|
\end{equation}
for all $1\leq x,y\leq X.$ From ($\ref{5.8}$), we shall derive a lower bound of fractional parts of polynomial having fewer terms by an inductive arguments. Specifically, at the $i$-th step, with $0\leq i\leq l,$ we shall show that for all $1\leq x\leq X$ and $1\leq y\leq Y_i$ with 
\begin{equation}\label{eqeq5.11}
Y_i=X^{1-(2^{l+1}(k-l)+2^l(k-l+1)+\cdots+2^{l-i+2}(k-l+i-1))\sigma},    
\end{equation}
there exist coefficients $\alpha_{l-i}^{(i)},\ldots,\alpha_0^{(i)}$, suitable integral multiples of $\alpha_{l-i},\ldots,\alpha_0$, respectively, such that 
\begin{equation}\label{eq5.11}
    \frac{1}{H}\ll \|\alpha_kx^k+\alpha_{l-i}^{(i)}x^{l-i}y^{k-l+i}+\cdots+\alpha_0^{(i)}y^k\|.
\end{equation}
The case $i=0$ obviously follows from ($\ref{5.8}$). Assume that ($\ref{eq5.11}$) holds for a particular $i\leq l-1.$ We shall show that ($\ref{eq5.11}$) holds for $i$ replaced by $i+1.$

Since $\mathcal{A}(Y_i,R)\subseteq [1,Y_i]\cap \Z$, the assumption ($\ref{eq5.11}$) implies
\begin{equation}
    \frac{1}{H}\ll \min_{\substack{1\leq x\leq X\\y\in \mathcal{A}(Y_i,R)}}\|\alpha_k x^k+\alpha_{l-i}^{(i)} x^{l-i}y^{k-l+i}+\cdots+\alpha_0^{(i)}y^k\|.
\end{equation}
Thus, by Lemma 2.1 and the fact that $|\mathcal{A}(Y_i,R)|\asymp Y_i,$ one has 
\begin{equation}\label{eq5.14}
    XY_i\ll \dsum_{1\leq h\leq H}\biggl|\dsum_{\substack{1\leq x\leq X\\y\in \mathcal{A}(Y_i,R)}}e(h(\alpha_kx^k+\alpha_{l-i}^{(i)}x^{l-i}y^{k-l+i}+\cdots+\alpha_0^{(i)}y^k))\biggr|
\end{equation}
By Dirichlet's theorem, there exist $q\in \N,a\in \Z$ with $(q,a)=1$ such that $$q\leq HX^{l-i-2^{l-i+1}(k-l+i)\sigma+\eta}Y_i^{k-l+i}$$ and 
\begin{equation}\label{eq5.15}
    \|q\alpha_{l-i}^{(i)}\|\leq \frac{X^{-\eta}X^{2^{l-i+1}(k-l+i)\sigma}}{HX^{l-i}Y_i^{k-l+i}}.
\end{equation}
Let $U=X^{2^{l-i+2}\sigma}.$ To apply Lemma 3.2 with $Z=U$ and $Y=Y_i$, we verify here that $U$ and $Y_i$ satisfy the hypotheses of Lemma 3.2, namely,
\begin{equation*}
    U\leq \min\{(HX^{l-i})^{1/(2(k-l+i))}, Y_i\}.
\end{equation*}

We first verify that $U\leq Y_i.$ Since we have $\rho(k,l)\leq k(k-1)$, one has 
$2^{l+1}(k-l)\leq k(k-1),$ and thus  
\begin{equation}\label{eqeq5.15}
2^l\leq k(k-1)/(2(k-l))\leq k-1,    
\end{equation}
since by an obvious restriction $l\leq\lfloor\frac{k}{2}\rfloor.$
Note also that
\begin{equation}\label{eqeq5.16}
\sigma=2/(k(k-1)+\rho(k,l))\leq \rho(k,l)^{-1},    
\end{equation} 
 since $\rho(k,l)\leq k(k-1).$ Hence, on recalling ($\ref{eqeq5.11}$), by $(\ref{eqeq5.15})$ and $(\ref{eqeq5.16})$, one finds that 
\begin{equation}\label{eqeq5.18}
    \begin{aligned}
       U=X^{2^{l-i+2}\sigma}\leq X^{2^{l+2}\rho(k,l)^{-1}}\leq X^{1-(\rho(k,l)-2^2(k-1))\rho(k,l)^{-1}}\leq X^{1-(\rho(k,l)-2^2(k-1))\sigma}\leq Y_i.
    \end{aligned}
\end{equation}

We turn to verify the other hypothesis. Since we have $\rho(k,l)\leq k(k-1)/7$ and by recalling the definition of $\rho(k,l)$, one has 
\begin{equation*}
    \sigma=\frac{2}{k(k-1)+\rho(k,l)}\leq \frac{1}{4\rho(k,l)}\leq \frac{1}{2^{l-i+3}(k-l+i)}.
\end{equation*}
Hence, we have 
\begin{equation}\label{eqeq5.22}
    U=X^{2^{l-i+2}\sigma}\leq X^{1/(2(k-l+i))}
\end{equation}

Thus, from ($\ref{eqeq5.18}$) and ($\ref{eqeq5.22}$), we verified that
\begin{equation*}
    U\leq \min\{(HX^{l-i})^{1/(2(k-l+i))}, Y_i\}.
\end{equation*}
Therefore, by applying Lemma 3.2 to the right hand side on $(\ref{eq5.14})$, we obtain 
\begin{equation}
    XY_i\ll \frac{H(XY_i)^{1+\epsilon}}{(q+HX^{l-i}Y_i^{k-l+i}|q\alpha_l-a|)^{1/(2^{l-i+1}(k-l+i))}}+H(XY_i)^{1+\epsilon}U^{-1/2^{l-i+2}}.
\end{equation}
This implies 
\begin{equation}
    q\leq X^{2^{l-i+1}(k-l+i)\sigma}.
\end{equation}
By applying the triangle inequality and putting $y=qy_1$ with $$1\leq y_1\leq Y_iX^{-2^{l-i+1}(k-l+i)\sigma}=Y_{i+1},$$
one finds that 
\begin{equation}\label{eq5.30}
    \begin{aligned}
      & \|\alpha_k x^k+\alpha_{l-i}^{(i)} x^{l-i}y^{k-l+i}+\cdots+\alpha_0^{(i)}y^k\|\\
       &\leq \|\alpha_k x^k+\alpha_{l-i-1}^{(i)} x^{l-i-1}(qy_1)^{k-l+i+1}+\cdots+\alpha_0^{(i)}(qy_1)^k\|+\|\alpha_{l-i}^{(i)}x^{l-i}(qy_1)^{k-l+i}\|.
    \end{aligned}
\end{equation}
By applying the triangle inequality and ($\ref{eq5.15}$), one has
\begin{equation}\label{eq5.31}
    \begin{aligned}
       \|\alpha_{l-i}^{(i)}x^{l-i}(qy_1)^{k-l+i}\|&\leq x^{l-i}(qy_1)^{k-l+i-1}y_1\|\alpha_{l-i}^{(i)}q\| \\
       &\leq X^{l-i}Y_{i}^{k-l+i}X^{-2^{l-i+1}(k-l+i)\sigma}\frac{X^{-\eta}X^{2^{l-i+1}(k-l+i)\sigma}}{HX^{l-i}Y_i^{k-l+i}}\\
       &\leq \frac{X^{-\eta}}{H}.
    \end{aligned}
\end{equation}
Thus, by $(\ref{eq5.11}),(\ref{eq5.30})$ and $(\ref{eq5.31}),$ one has
\begin{equation}\label{eq5.32}
    1/H\ll \|\alpha_k x^k+\alpha_{l-i-1}^{(i)} x^{l-i-1}(qy_1)^{k-l+i+1}+\cdots+\alpha_0^{(i)}(qy_1)^k\|
\end{equation}
for all $1\leq x\leq X$ and $1\leq y_1\leq Y_{i+1}.$ By writing 
$$\alpha_{l-i-r}^{(i+1)}=\alpha_{l-i-r}^{(i)}q^{k-l+i+r}\ (1\leq r\leq l-i),$$ 
one concludes from ($\ref{eq5.32}$) that
$$ 1/H\ll \|\alpha_k x^k+\alpha_{l-i-1}^{(i+1)} x^{l-i-1}y_1^{k-l+i+1}+\cdots+\alpha_0^{(i+1)}y_1^k\|$$
for all $1\leq x\leq X$ and $1\leq y_1\leq Y_{i+1}.$ Thus, this confirms the inductive step.

Thus, one infers by induction that 
$$1/H\ll \|\alpha_kx^k+\alpha_0^{(l)}y^k\|$$
for all $1\leq x\leq X$ and $1\leq y\leq Y_l.$
This contradicts the conclusion of Proposition 5.1. Thus, we are forced to conclude that 
$$\min_{\substack{0\leq x,y\leq X\\ (x,y)\neq (0,0)}}\|\alpha_kx^k+\alpha_lx^ly^{k-l}+\alpha_{l-1}x^{l-1}y^{k-l+1}+\cdots+\alpha_0y^k\|\leq 1/H.$$\end{proof}

\section{Proof of Theorem 1.4}
Our goal in this section is to prove Theorem 1.4. We begin this section by examining the corresponding diagonal problem. To prove following Proposition 6.2, we require the minor arc estimates in [$\ref{ref12}$, Corollary 2]. Throughout this section, we take $R=X^{\eta}$ with $\eta$ positive sufficient small. We state here this corollary without proof as a proposition.

\begin{prop}
Let $\mathfrak{m}_{\lambda}$ denote the set of $\alpha\in \R$ such that whenever $a\in \Z,$ $q\in \N,$ $(a,q)=1$ and $|\alpha-a/q|\leq q^{-1}X^{\lambda-k},$ then $q>X^{\lambda}R$. Then there is a natural number $k_0(\epsilon)$ with the following property. When $k\geq k_0(\epsilon),$ there are real numbers $\lambda=\lambda(k)$, $\sigma(k)$ and $C>0$ with 
$$\frac{\log\log k}{\log k}\ll 1-\lambda\ll \frac{\log\log k}{\log k}\ \ \textrm{and}\ \ \sigma(k)^{-1}=k(\log k+C\log\log k),$$
and such that 
$$\dsum_{x\in\mathcal{A}(X,R)}e(\alpha x^k)\ll X^{1-\sigma(k)+\epsilon}.$$
\end{prop}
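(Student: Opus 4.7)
The plan is to follow the general strategy of Wooley, which proves minor-arc bounds for smooth Weyl sums by combining efficient congruencing (equivalently, $\ell^2$-decoupling for the moment curve) with iterated exploitation of the multiplicative structure built into $\mathcal{A}(X,R)$. First I would use H\"older's inequality to reduce pointwise bounds for $\sum_{x\in\mathcal{A}(X,R)}e(\alpha x^k)$ to a weighted mean value of the shape
$$\int_0^1 \Bigl|\sum_{x\in\mathcal{A}(X,R)}e(\alpha x^k)\Bigr|^{2s}\,d\alpha,$$
and bound this by a Vinogradov-type mean value $J_{s,k}(X)$. By the resolution of the main conjecture in Vinogradov's mean value theorem, $J_{s,k}(X)\ll X^{2s-\frac{1}{2}k(k+1)+\epsilon}$ once $s\geq \frac{1}{2}k(k+1)$, and this delivers, via a standard pigeonholing into major and minor arcs, the baseline exponent $\sigma(k)^{-1}\asymp k(k-1)$.

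Next I would boost the baseline exponent to the claimed $\sigma(k)^{-1}=k(\log k+C\log\log k)$ by iterating. The key device is that any $x\in\mathcal{A}(X,R)$ admits a controlled factorization $x=yz$ with $y$ in a short dyadic range dictated by $R=X^\eta$; substituting this into $e(\alpha x^k)$ produces a bilinear sum in which $z$ plays the role of a differencing variable. Applying the mean value estimate from Step 1 to the inner sum, and then averaging over $y$ using Cauchy--Schwarz, yields a small gain on the exponent. Iterating this roughly $\log k$ times, with the scale of $y$ decreasing geometrically, is what produces the logarithmic denominator $\log k+C\log\log k$.

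The main obstacle is a delicate bookkeeping issue in the iteration: at each step one must verify that the rational approximation to $\alpha$ supplied by the minor-arc condition $\alpha\in\mathfrak{m}_\lambda$ survives the multiplicative substitution $x=yz$, in the sense that the denominator of the approximant to the new frequency remains suitably large. This forces a careful choice of $\lambda=\lambda(k)$, and the requirement $(\log\log k)/\log k\ll 1-\lambda\ll (\log\log k)/\log k$ is exactly what makes the propagation close up after the prescribed number of iterations.

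Since the statement is lifted verbatim from Wooley [\ref{ref12}, Corollary 2] and its proof occupies the bulk of that paper, my intention in the current work is only to \emph{invoke} it, and so no new argument is needed here beyond verifying that the hypotheses on $k,\epsilon,\lambda$ match those used in Sections 5--6.
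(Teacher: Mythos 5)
Your proposal matches the paper exactly: Proposition 6.1 is Corollary 2 of Wooley's \emph{New estimates for smooth Weyl sums} (reference [$\ref{ref12}$]), and the paper's entire proof is the one-line citation ``See [$\ref{ref12}$, Corollary 2]''; your closing paragraph, which recognises that the result is simply being invoked and that nothing new is needed beyond checking the hypotheses on $k$, $\epsilon$, $\lambda$, is the correct reading. One small caveat about the warm-up sketch only: Wooley's 1995 paper predates decoupling and efficient congruencing by two decades, so its internal machinery is Vaughan's iterative method together with repeated efficient differencing over smooth numbers rather than ``the resolution of the main conjecture in Vinogradov's mean value theorem''--- but since you ultimately invoke rather than reprove the estimate, this anachronism is harmless for the present purpose.
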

\begin{proof}
See [$\ref{ref12}$, Corollary 2].
\end{proof}

To state the following proposition, we exploit the exponent $\lambda=\lambda(k)$ defined in Proposition 6.1.

\begin{prop}
Let $\alpha,\beta \in \R$ and  $k\in \N$. Define $$\sigma=\frac{1}{k\log k+Ck\log\log k},\ \ \ \sigma_0=\frac{\sigma}{1-\lambda-\sigma}.$$ Let $X$ and $Y$ be  real numbers sufficiently large in terms of $k$ and $\epsilon$ with $X^{\sigma_0}\leq Y\leq X.$ Then, one has
\begin{equation*}
    \min_{\substack{0\leq x\leq X\\0\leq y\leq Y\\(x,y)\neq(0,0)}}\|\alpha x^k+\beta y^k\|\leq  X^{-\sigma+\epsilon}Y^{-\sigma+\epsilon}.
\end{equation*}

\end{prop}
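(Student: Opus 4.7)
The plan is to mimic the structure of the proof of Proposition 5.1, with the classical Weyl--Vinogradov minor-arc bound replaced by Wooley's smooth-number minor-arc estimate supplied by Proposition 6.1. Since the polynomial is diagonal, the exponential sum will split as a product and we can treat each variable separately, at the cost of restricting to smooth values in both variables.

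To begin, I would set $H=X^{\sigma-\epsilon}Y^{\sigma-\epsilon}$ and assume for contradiction that $\|\alpha x^k+\beta y^k\|>1/H$ for every nontrivial $(x,y)$ in the box. Dirichlet's theorem supplies approximations $a_1/q_1$ to $\alpha$ and $a_2/q_2$ to $\beta$ with $|\alpha-a_1/q_1|\leq (q_1X^{k-1}H)^{-1}$ and analogously for $\beta$; as in the opening paragraphs of the proof of Proposition 5.1, the contradiction assumption forces $q_1>X$ and $q_2>Y$. Choosing $R=X^{\eta}$ with $\eta$ small (in terms of $\epsilon$), I would then restrict to $x\in \mathcal{A}(X,R)$ and $y\in \mathcal{A}(Y,R)$, which still carry a positive density of the original box. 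Lemma 2.1 together with the product structure of $e(h(\alpha x^k+\beta y^k))$ then yields
\begin{equation*}
XY\ll \sum_{1\leq h\leq H}|F(h\alpha)||G(h\beta)|,\qquad F(\theta)=\sum_{x\in\mathcal{A}(X,R)}e(\theta x^k),\ G(\theta)=\sum_{y\in\mathcal{A}(Y,R)}e(\theta y^k).
\end{equation*}

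The next step is to split each $h\leq H$ according to whether $h\alpha$ lies in $\mathfrak{m}_{\lambda}$ (defined with parameter $X$) and whether $h\beta$ lies in the analogous set with parameter $Y$. When $h\alpha\in\mathfrak{m}_{\lambda}$, Proposition 6.1 gives $|F(h\alpha)|\ll X^{1-\sigma+\epsilon/2}$, and likewise for $G$. If instead $h\alpha\notin\mathfrak{m}_{\lambda}$, there exist coprime $b,r$ with $r\leq X^{\lambda}R$ and $|h\alpha-b/r|\leq (rX^{k-\lambda})^{-1}$; combining this approximation with the Dirichlet approximation $a_1/q_1$ to $\alpha$ and using $q_1>X$, one forces $|ha_1r-bq_1|<1$, hence $q_1|hr$, so $h$ is a multiple of some divisor $d$ of $q_1$ with $d\geq q_1/(X^{\lambda}R)$. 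A routine divisor estimate bounds the number of such ``major'' $h$ by $\ll HX^{\lambda-1+\epsilon}R$, and symmetrically by $\ll HY^{\lambda-1+\epsilon}R$ for $h\beta$. I would then bound each of the four case-contributions (minor-minor, minor-major, major-minor, major-major), using Proposition 6.1 where both are minor and the trivial bound $|F|\leq X$, $|G|\leq Y$ where needed, and verify that each contribution is $\ll(XY)^{1-\delta}$ for some $\delta>0$; this contradicts the lower bound above and forces the required conclusion.

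The main obstacle is the double-major case, where both $h\alpha$ and $h\beta$ admit good rational approximations. Its contribution is
\begin{equation*}
\ll HX Y^{\lambda+\eta}(XY)^{O(\epsilon)}\ll X^{1+\sigma-\epsilon}Y^{\lambda+\sigma-\epsilon+\eta}(XY)^{O(\epsilon)},
\end{equation*}
and this is exactly where the hypothesis $Y\geq X^{\sigma_0}$ and the identity $\sigma_0(1-\lambda-\sigma)=\sigma$ are needed: the comparison reduces to requiring $X^{\eta}\ll X^{\epsilon(1+\sigma_0)}$, which holds provided $\eta$ is chosen sufficiently small in terms of $\epsilon$. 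The single-major cases are less delicate since there $1-\lambda\gg\log\log k/\log k$ is much larger than $\sigma+\eta$, giving genuine power savings $X^{\lambda+\sigma-1+\eta}$ or $Y^{\lambda+\sigma-1+\eta}$. The other balance to watch is that $H<\min(X^{1-\lambda},Y^{1-\lambda})$, which is needed for the divisibility step $q_1|hr$ and which again ties back to the assumption $Y\geq X^{\sigma_0}$ via the same identity for $\sigma_0$.
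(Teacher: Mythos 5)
Your proposal is correct in spirit and would, I believe, work after a careful verification of the four cases, but it is genuinely more complicated than the paper's argument, which avoids the major/minor split over $h$ entirely. Here is the key simplification you missed. Suppose some $h\in[1,H]$ has $h\alpha\notin\mathfrak{m}_\lambda$, so there exist coprime $q,a$ with $q\le X^{\lambda}R$ and $|h\alpha-a/q|\le q^{-1}X^{\lambda-k}$. Since $Y\le X$ and $H\le X^{2\sigma}$, one has $qh\le HX^{\lambda}R\le X$, so the pair $(x,y)=(qh,0)$ lies in the original box. Moreover
$$\|\alpha(qh)^k\|\le (qh)^{k-1}\|q h\alpha\|\le (HX^\lambda R)^{k-1}X^{\lambda-k}\le X^{\lambda-1}\le 1/H,$$
which directly contradicts the standing assumption. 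The same device eliminates any $h$ with $h\beta\notin\mathfrak{m}_\lambda$, where the hypothesis $Y\ge X^{\sigma_0}$ together with $\sigma_0(1-\lambda-\sigma)=\sigma$ is used precisely to guarantee $qh\le HY^\lambda R\le Y$. After this, \emph{every} $h\le H$ has both $h\alpha$ and $h\beta$ on minor arcs, so Proposition 6.1 applied to each factor yields $|F(h\alpha)||G(h\beta)|\ll (XY)^{1-\sigma+\epsilon'}$ uniformly in $h$, and summing over $h$ gives $\ll H(XY)^{1-\sigma+\epsilon'}\ll (XY)^{1-\epsilon/2}$, contradicting the lower bound from Lemma 2.1. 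Your divisor-counting route (which essentially transplants the $U_1,\dots,U_4$ decomposition of Proposition 5.1) is not wrong, and it shows a correct understanding of where $Y\ge X^{\sigma_0}$ enters, but it costs you the somewhat delicate double-major case and the divisor estimate $d(q_1)\ll q_1^\delta$ bookkeeping, none of which is needed: the smooth-Weyl major arc condition $q\le X^\lambda R$ is so restrictive that major $h$'s are simply inconsistent with the contradiction hypothesis.
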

\begin{proof}
Let $H=X^{\sigma-\epsilon}Y^{\sigma-\epsilon}.$ Notice that $\max\{X^{\lambda-1}, Y^{\lambda-1}\}\leq 1/H.$ Suppose that there exists no $x,y$ satisfying $\|\alpha x^k+\beta y^k\|\leq 1/H.$

Suppose first that there exist $q\in \N,\ a\in \Z,\ h\in [1,H]\cap \Z$ with $(q,a)=1$ such that $q\leq X^{\lambda}R$ and 
\begin{equation}\label{62}
    \bigl|h\alpha -\frac{a}{q}\bigr|\leq q^{-1}X^{\lambda-k}.
\end{equation}
Since we have $Y\leq X$, we find that $hq\leq HX^{\lambda}R\leq X$. Then, by putting $y=qh$, it follows from $(\ref{62})$ that
\begin{equation*}
    \min_{\substack{0\leq x\leq X\\0\leq y\leq Y\\(x,y)\neq(0,0)}}\|\alpha x^k+\beta y^k\|\leq \|\alpha (qh)^k\|\leq (HX^{\lambda}R)^{k-1}\|\alpha qh\|\leq X^{k-1}X^{\lambda-k}\leq 1/H,
\end{equation*}
which contradicts our assumption.

Suppose next that there exist $q\in \N,\ a\in \Z,\ h\in [1,H]\cap \Z$ with $(q,a)=1$ such that $q\leq Y^{\lambda}R$ and 
\begin{equation}\label{63}
    \bigl|h\beta-\frac{a}{q}\bigr|\leq Y^{\lambda-k}.
\end{equation}
Since we have $X^{\sigma_0}\leq Y,$ we find that $qh\leq HY^{\lambda}R\leq Y.$  Then, by putting $y=qh$, it follows from ($\ref{63}$) that 
\begin{equation*}
    \min_{\substack{0\leq x\leq X\\0\leq y\leq Y\\(x,y)\neq(0,0)}}\|\alpha x^k+\gamma y^k\|\leq \|\gamma (qh)^k\|\leq (HY^{\lambda}R)^{k-1}\|\gamma qh\|\leq Y^{k-1}Y^{\lambda-k}\leq 1/H,
\end{equation*}
which contradicts our assumption. 

Thus, on recalling the definition of $\mathfrak{m}_{\lambda}$ in the statement of proposition 6.1. We may assume that for all $h\in [1,H]\cap \Z,$ one has $h\alpha, h\beta\in \mathfrak{m}_{\lambda}.$ Meanwhile, it follows from our assumption by Lemma 2.1 that 
\begin{equation*}
    XY\ll \dsum_{1\leq h\leq H}\bigl|\dsum_{\substack{x\in \mathcal{A}(X,R)\\y\in \mathcal{A}(Y,R)}}e(h(\alpha x^k+\beta y^k))\bigr|.
\end{equation*}
Therefore, on writing 
\begin{equation*}
    \bigl|\dsum_{\substack{x\in \mathcal{A}(X,R)\\y\in \mathcal{A}(Y,R)}}e(h(\alpha x^k+\beta y^k))\bigr|=\bigl|\dsum_{x\in \mathcal{A}(X,R)}e(h(\alpha x^k))\bigr|\bigl|\dsum_{y\in \mathcal{A}(Y,R)}e(h(\beta y^k))\bigr|,
\end{equation*}
by applying Proposition 6.1, we find that
\begin{equation*}
    \dsum_{1\leq h\leq H}\bigl|\dsum_{\substack{x\in \mathcal{A}(X,R)\\y\in \mathcal{A}(Y,R)}}e(h(\alpha x^k+\beta y^k))\bigr|\ll (XY)^{1-\epsilon},
\end{equation*}
which contradicts our assumption. Hence, we are forced to have 
\begin{equation*}
    \min_{\substack{0\leq x\leq X\\0\leq y\leq Y\\(x,y)\neq(0,0)}}\|\alpha x^k+\beta y^k\|\leq  1/H.
\end{equation*}
\end{proof}

\begin{proof}[Proof fo Theorem 1.4]
Let $H=X^{\sigma-\epsilon}$ with $\sigma=2/(k\log k+\rho(k,l)+Ck\log\log k)$. Suppose that 
\begin{equation}\label{eq6.4}
    1/H\leq \|\alpha_kx^k+\alpha_lx^ly^{k-l}+\cdots+\alpha_0y^k\|
\end{equation}
for all $1\leq x,y\leq X.$ From ($\ref{eq6.4}$), we shall derive a lower bound of fractional parts of polynomial having fewer terms by an inductive arguments. Specifically, at the $i$-th step, with $0\leq i\leq l,$ we shall show that for all $1\leq x\leq X$ and $1\leq y\leq Y_i$ with 
\begin{equation*}
Y_i=X^{1-(2^{l+1}(k-l)+2^l(k-l+1)+\cdots+2^{l-i+2}(k-l+i-1))\sigma},    
\end{equation*}
there exist coefficients $\alpha_{l-i}^{(i)},\ldots,\alpha_0^{(i)}$, suitable integral multiples of $\alpha_{l-i},\ldots,\alpha_0$, respectively, such that 
\begin{equation}\label{eq6.6}
    \frac{1}{H}\ll \|\alpha_kx^k+\alpha_{l-i}^{(i)}x^{l-i}y^{k-l+i}+\cdots+\alpha_0^{(i)}y^k\|.
\end{equation}
The case $i=0$ obviously follows from ($\ref{eq6.4}$). Assume that ($\ref{eq6.6}$) holds for a particular $i\leq l-1.$ We shall show that ($\ref{eq6.6}$) holds for $i$ replaced by $i+1.$

Since $\mathcal{A}(Y_i,R)\subseteq [1,Y_i]\cap \Z,$ the assumption ($\ref{eq6.6}$) implies
\begin{equation}
    \frac{1}{H}\ll \min_{\substack{1\leq x\leq X\\y\in \mathcal{A}(Y_i,R)}}\|\alpha_k x^k+\alpha_{l-i}^{(i)} x^{l-i}y^{k-l+i}+\cdots+\alpha_0^{(i)}y^k\|.
\end{equation}
Thus, by Lemma 2.1 and the fact that $|\mathcal{A}(Y_i,R)|\asymp Y_i,$ one has 
\begin{equation}\label{eq6.8}
    XY_i\ll \dsum_{1\leq h\leq H}\biggl|\dsum_{\substack{1\leq x\leq X\\y\in \mathcal{A}(Y_i,R)}}e(h(\alpha_kx^k+\alpha_{l-i}^{(i)}x^{l-i}y^{k-l+i}+\cdots+\alpha_0^{(i)}y^k))\biggr|
\end{equation}
By Dirichlet's theorem, there exist $q\in \N,a\in \Z$ with $(q,a)=1$ such that $$q\leq HX^{l-i-2^{l-i+1}(k-l+i)\sigma+\eta}Y_i^{k-l+i}$$ and 
\begin{equation}\label{eq6.9}
    \|q\alpha_{l-i}^{(i)}\|\leq \frac{X^{-\eta}X^{2^{l-i+1}(k-l+i)\sigma}}{HX^{l-i}Y_i^{k-l+i}}.
\end{equation}
Let $U=X^{2^{l-i+2}\sigma}.$ By the same treatment in the proof of Theorem 1.3 with $k(k-1)$ replaced by $k\log k$, we find that this $U$ and $Y_i$ satisfy the hypotheses of Lemma 3.2. Therefore, by applying Lemma 3.2 to the right hand side on ($\ref{eq6.8}$), we obtain 
\begin{equation*}
    XY_i\ll \frac{H(XY_i)^{1+\epsilon}}{(q+HX^{l-i}Y_i^{k-l+i}|q\alpha_l-a|)^{1/(2^{l-i+1}(k-l+i))}}+H(XY_i)^{1+\epsilon}U^{-1/2^{l-i+2}}.
\end{equation*}
This implies 
\begin{equation}\label{eq6.10}
    q\leq X^{2^{l-i+1}(k-l+i)\sigma}.
\end{equation}
By applying the triangle inequality and putting $y=qy_1$ with $$1\leq y_1\leq Y_iX^{-2^{l-i+1}(k-l+i)\sigma}=Y_{i+1},$$
one finds that 
\begin{equation}\label{eq6.11}
    \begin{aligned}
      & \|\alpha_k x^k+\alpha_{l-i}^{(i)} x^{l-i}y^{k-l+i}+\cdots+\alpha_0^{(i)}y^k\|\\
       &\leq \|\alpha_k x^k+\alpha_{l-i-1}^{(i)} x^{l-i-1}(qy_1)^{k-l+i+1}+\cdots+\alpha_0^{(i)}(qy_1)^k\|+\|\alpha_{l-i}^{(i)}x^{l-i}(qy_1)^{k-l+i}\|.
    \end{aligned}
\end{equation}
By applying the triangle inequality and ($\ref{eq6.9}$), one has
\begin{equation}\label{eq6.12}
    \begin{aligned}
       \|\alpha_{l-i}^{(i)}x^{l-i}(qy_1)^{k-l+i}\|&\leq x^{l-i}(qy_1)^{k-l+i-1}y_1\|\alpha_{l-i}^{(i)}q\| \\
       &\leq X^{l-i}Y_{i}^{k-l+i}X^{-2^{l-i+1}(k-l+i)\sigma}\frac{X^{-\eta}X^{2^{l-i+1}(k-l+i)\sigma}}{HX^{l-i}Y_i^{k-l+i}}\\
       &\leq \frac{X^{-\eta}}{H}.
    \end{aligned}
\end{equation}
Thus, by $(\ref{eq6.6}),(\ref{eq6.11})$ and $(\ref{eq6.12}),$ one has
\begin{equation}\label{eq6.13}
    1/H\ll \|\alpha_k x^k+\alpha_{l-i-1}^{(i)} x^{l-i-1}(qy_1)^{k-l+i+1}+\cdots+\alpha_0^{(i)}(qy_1)^k\|
\end{equation}
for all $1\leq x\leq X$ and $1\leq y_1\leq Y_{i+1}.$ By writing 
$$\alpha_{l-i-r}^{(i+1)}=\alpha_{l-i-r}^{(i)}q^{k-l+i+r}\ (1\leq r\leq l-i),$$ 
one concludes from ($\ref{eq6.13}$) that
$$ 1/H\ll \|\alpha_k x^k+\alpha_{l-i-1}^{(i+1)} x^{l-i-1}y_1^{k-l+i+1}+\cdots+\alpha_0^{(i+1)}y_1^k\|$$
for all $1\leq x\leq X$ and $1\leq y_1\leq Y_{i+1}.$ 

Thus, one infers from induction that 
\begin{equation}\label{ineq6.14}
    1/H\ll \|\alpha_kx^k+\alpha_0^{(l)}y^k\|
\end{equation}
for all $1\leq x\leq X$ and $1\leq y\leq Y_l.$ Since $Y_l=X^{1-\rho(k,l)\sigma}$ and $\rho(k,l)\leq k\log k,$ a modicum  computation leads to the relation that $Y_l\geq X^{\sigma_0}.$ Therefore, the inequality ($\ref{ineq6.14}$) contradicts the conclusion of Proposition 6.2. Thus, we are forced to conclude that 
$$\min_{\substack{0\leq x,y\leq X\\ (x,y)\neq (0,0)}}\|\alpha_kx^k+\alpha_lx^ly^{k-l}+\alpha_{l-1}x^{l-1}y^{k-l+1}+\cdots+\alpha_0y^k\|\leq 1/H.$$
\end{proof}

\section*{Appendix A. Estimations in the proof of Proposition 3.4}
Let $\alpha,\ N$ be positive real numbers, and $k$ and $l$ be natural numbers with $k-l\geq 2$. Then, one has following:
\begin{align*}
&(1)\ \dsum_{1\leq n\leq N}\frac{1}{1+\alpha n^{k-l}}\ll \frac{N}{(1+N^{k-l}\alpha)^{1/(k-l)}}\\
&(2)\ \dsum_{1\leq n\leq N}\frac{1}{(1+\alpha n^{k-l})^{1/(k-l)}}\ll \frac{N\log N}{(1+N^{k-l}\alpha)^{1/(k-l)}}    \\
&(3)\ \dsum_{1\leq n\leq N}\frac{1}{(1+\alpha n)^{2/(k-l)}}\ll \frac{N\log N}{(1+N\alpha)^{2/(k-l)}}
\end{align*}
\begin{proof}[Proof of (1)]
We have 
$$\dsum_{1\leq n\leq N}\frac{1}{1+\alpha n^{k-l}}\ll \dsum_{1\leq n\leq N}\min\{1, \alpha^{-1}n^{-k+l}\}\ll N+\alpha^{-1/(k-l)},$$
where $N$ and $\alpha^{-1/(k-l)}$ are derived from the contributions $1$ and $\alpha^{-1}n^{-k+l}$, respectively.
\end{proof}
\begin{proof}[Proof of (2)]
We have 
$$\dsum_{1\leq n\leq N}\frac{1}{1+\alpha n^{k-l}}\ll \dsum_{1\leq n\leq N}\min\{1, \alpha^{-1/(k-l)}n^{-1}\}\ll N+\alpha^{-1/(k-l)}\log N,$$
where $N$ and $\alpha^{-1/(k-l)}\log N$ are derived from the contributions $1$ and $\alpha^{-1/(k-l)}n^{-1}$, respectively.
\end{proof}
\begin{proof}[Proof of (3)]
We have 
$$\dsum_{1\leq n\leq N}\frac{1}{1+\alpha n^{k-l}}\ll \dsum_{1\leq n\leq N}\min\{1, (\alpha n)^{-2/(k-l)}\}\ll N+\frac{N\log N}{\alpha^{2/(k-l)}N^{2/(k-l)}},$$
where $N$ and $\frac{N\log N}{\alpha^{2/(k-l)}N^{2/(k-l)}}$ are derived from the contributions $1$ and $(\alpha n)^{-2/(k-l)}$, respectively.
\end{proof}

\end{document}